\documentclass[11pt,a4paper]{amsart}
\allowdisplaybreaks[1]
\usepackage{mathtools}
\mathtoolsset{showonlyrefs}
\usepackage{amssymb}
\usepackage{mathrsfs}
\usepackage{amsbsy,amsgen,amscd,amsthm}
\usepackage{bm}
\usepackage{dsfont}
\usepackage{graphicx}
\usepackage{subfig}
\usepackage{caption}
\usepackage{float}
\usepackage{enumerate}
\usepackage{enumitem}
\usepackage{setspace}
\usepackage{tikz}
\usepackage[top=3.2cm,bottom=3.8cm,left=3cm,right=2cm]{geometry}
\usepackage{xcolor}
\usepackage[T1]{fontenc}
\usepackage{lmodern}
\usepackage[unicode=true,colorlinks,linkcolor=blue,citecolor=blue,urlcolor=blue,pagebackref]{hyperref}

\theoremstyle{plain}
\newtheorem{theorem}{Theorem}[section]
\newtheorem{lemma}[theorem]{Lemma}

\newtheorem{problem}{Problem}

\theoremstyle{definition}
\newtheorem{definition}[theorem]{Definition}

\theoremstyle{remark}
\newtheorem{remark}[theorem]{Remark}

\numberwithin{equation}{section}

\newcommand{\R}{\mathbb{R}}
\newcommand{\Sph}{\mathbb{S}}
\newcommand{\Ball}{\mathbb{B}}
\newcommand{\cP}{\mathcal{P}}
\newcommand{\cE}{\mathcal{E}}
\newcommand{\cA}{\mathcal{A}}
\newcommand{\Aut}{\operatorname{Aut}}
\newcommand{\vol}{\operatorname{vol}}
\newcommand{\degree}{\operatorname{deg}}
\newcommand{\supp}{\operatorname{supp}}
\newcommand{\Hom}{\operatorname{Hom}}
\newcommand{\End}{\operatorname{End}}
\newcommand{\GL}{\operatorname{GL}}
\newcommand{\OO}{\operatorname{O}}
\newcommand{\Lip}{\operatorname{Lip}}
\newcommand{\Int}{\operatorname{int}}
\newcommand{\Span}{\operatorname{span}}
\newcommand{\Sym}{\operatorname{Sym}}

\newcommand{\dd}{\,d}
\newcommand{\ip}[2]{\langle #1,#2\rangle}

\begin{document}
	\enlargethispage{3pt}
	
	\begin{center}
		
		{\Large \bf A solution to Banach's isometric conjecture}
		\vskip 20pt
		\begin{center}
			{{\bf    Xinbao~~Lu \quad \quad Kaiwen~~Yang}\\~~ \\ \small{School of Mathematical Sciences, Key Laboratory of Intelligent Computing and Applications (Tongji University), Ministry of Education, Tongji University, Shanghai, 200092, China}}
		\end{center}
	\end{center}
	
	\vskip 5pt

	\footnotetext{Email: xinbaolu@tongji.edu.cn; yangkaiwen@tongji.edu.cn.}
	\footnotetext{Research of the first author was supported by NSFC No. 12501078.}
	
	\vskip 5pt
	\begin{center}
		\begin{minipage}{14cm}
			{{\bf Abstract.}
				Banach asked in 1932 whether a real Banach space $X$ whose $n$-dimensional subspaces, for some fixed $1<n<\dim X$, are all linearly isometric must be a Hilbert space. 
				Gromov proved the conjecture for even $n$, and subsequent work settled several odd-dimensional cases. 
				We prove the conjecture for every odd $n$, including all previously unresolved cases. Together with Gromov's result for even $n$, 
				this completes Banach's isometric conjecture in the real case. The proof combines principal bundle theory with a Brouwer degree argument.
			}
			
			\vskip 5pt
			{{\bf 2020 Mathematics Subject Classification.}
				Primary 46C15, 52A21; Secondary 55R10, 55M25.}
			
			\vskip 5pt
			{{\bf Keywords.}
				Banach's conjecture, convex body, principal bundle, Brouwer degree.}
		\end{minipage}
	\end{center}
	
	\vskip 25pt
	\section{\bf Introduction}
	\label{sec:introduction}
	\vskip 5pt
	
	Banach posed the following problem in 1932
	\cite[Chapter~XII, Remarks, property~(5), p.~244]{Banach1932}.
	
	\begin{problem}[Banach]\label{prob:banach}
		Let $X$ be a real or complex Banach space. If, for some fixed integer $2\leq n<\dim X$, all $n$-dimensional linear subspaces of $X$ are linearly isometric, must $X$ be a Hilbert space?
	\end{problem}
	
	An affirmative answer to Problem~\ref{prob:banach} is commonly called Banach's isometric conjecture.
	
	It suffices to prove the conjecture in the finite-dimensional codimension-one case $\dim X=n+1$. Indeed, assume that the conjecture holds for $(n+1)$-dimensional spaces, and let $X$ satisfy the hypothesis. If $Y\subset X$ is $(n+1)$-dimensional, then all $n$-dimensional subspaces of $Y$ are $n$-dimensional subspaces of $X$ and hence are mutually linearly isometric. The assumed case therefore implies that $Y$ is a Hilbert space. Since every pair of vectors in $X$ is contained in such a subspace $Y$, the norm of $X$ satisfies the parallelogram identity. Hence $X$ is a Hilbert space.
	
	The conjecture was first proved in the real case for $n=2$ by Auerbach, Mazur and Ulam~\cite{AMU1935}. 
	Dvoretzky's theorem~\cite{Dvoretzky1959} implies an affirmative answer for every $n\geq2$ when $X$ is an
	infinite-dimensional real Banach space. Milman~\cite{Milman1971} extended Dvoretzky's theorem to complex Banach spaces, thereby settling the infinite-dimensional complex case as well. 
	In 1967, Gromov \cite{Gromov1967} proved the conjecture for every even $n$, over both the real and complex fields; for odd $n$, he also proved it for real spaces when $\dim X\geq n+2$ and for complex spaces when $\dim X\geq2n$.
	
	Thus the remaining finite-dimensional real problem was concentrated in odd dimension and codimension one. Bor, Hern\'andez-Lamoneda, Jim\'enez-Desantiago and Montejano \cite{BorEtAl2021} settled the cases $n\equiv1\pmod4$, $n\geq5$, with the possible exception of $n=133$. 
	Ivanov, Mamaev and Nordskova~\cite{IMN2023} later settled $n=3$, $\dim X=4$. 
	In the complex case, Bracho and Montejano~\cite{BrachoMontejano2021} proved the conjecture for $n\equiv1\pmod4$. 
	
	The present paper is concerned with the real case. We prove Banach's isometric conjecture for every odd $n$, including all previously
	unresolved cases. Together with the earlier results, this settles Banach's isometric conjecture in the real case.
	
	\begin{theorem}\label{thm:main}
		Let $X$ be a real Banach space. If, for some fixed integer $2\leq n<\dim X$, all $n$-dimensional linear subspaces of $X$ are linearly isometric, then $X$ is a Hilbert space.
	\end{theorem}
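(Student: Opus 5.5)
We reduce the statement, as explained above, to the finite-dimensional codimension-one case. By Gromov's theorem \cite{Gromov1967} we may assume $n$ is odd, $n\ge 3$, and $\dim X=n+1$. Let $K\subset\R^{n+1}$ be the unit ball of $X$, an origin-symmetric convex body. The hypothesis says that there is a fixed symmetric convex body $L\subset\R^n$ such that for every $u\in\Sph^n$ the section $K\cap u^\perp$ is a linear image of $L$. We must show that $K$ is an ellipsoid. After applying a linear map (for example, normalizing the John ellipsoid of $K$), we may further arrange that $K$ is in a convenient position.

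\textbf{Step 1: the principal bundle.} Let $G=\Aut(L)\subset\GL(n)$ be the (compact) linear symmetry group of $L$. We may choose $L$ in John position, so that $G\subset\OO(n)$. For each $u$, consider the set $P_u$ of linear isomorphisms $T\colon\R^n\to u^\perp$ with $T(L)=K\cap u^\perp$. This set is a $G$-torsor, and the $P_u$ assemble into a principal $G$-bundle $P\to\Sph^n$. We then compose with the John-position normalization, which depends continuously on $u$, and replace each $T$ by its polar part. This produces a reduction of the frame bundle of the tangent bundle $T\Sph^n$, with $u^\perp=T_u\Sph^n$, to the structure group $G\subset\OO(n)$. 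If $G$ acts transitively on $\Sph^{n-1}$, then $L$ is a ball, every section of $K$ is an ellipsoid, and classical results (Brunn, Busemann) make $K$ an ellipsoid. We therefore assume $G$ is a proper closed subgroup of $\OO(n)$ that does not act transitively on $\Sph^{n-1}$.

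\textbf{Step 2: the Brouwer degree argument.} A reduction of $T\Sph^n$ to $G$ gives associated bundles with fiber $\OO(n)/G$ and, more usefully, with fiber $\Sph^{n-1}/G$. The plan is to exploit a $G$-invariant geometric function on $\Sph^{n-1}$ attached to $L$. Examples are the radial function of $L$ in John position, or the distance to the John ellipsoid. Such a function yields a canonical $G$-invariant structure: a finite $G$-orbit of distinguished directions, or the critical set of the function. We transport this structure to every $u^\perp$, obtaining a continuous field on $\Sph^n$ of nonempty finite subsets of tangent directions, or of nonzero tangent vectors. Averaging over the orbit, or choosing a distinguished tangent line, gives a continuous line field or a multivalued vector field on $\Sph^n$.

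When $n$ is odd, $\Sph^n$ does admit nonvanishing vector fields, so the bare hairy-ball argument does not apply. This is exactly why odd $n$ was open, and the argument has to be finer. We will instead combine two ingredients. The first is the degree of the map $\Sph^n\to\Sph^n$ induced by the antipodal symmetry: $K\cap u^\perp=K\cap(-u)^\perp$ forces compatibility of the structure under $u\mapsto -u$. The second is the parity/Euler-characteristic computation for the induced map on orbits. The goal is to show that a certain equivariant map $\Sph^n\to\Sph^n$ would have to have both even and odd degree.

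\textbf{Main obstacle.} The hard step is choosing the right $G$-invariant data for an arbitrary compact $G\subsetneq\OO(n)$ so that Step 2 produces a genuine contradiction, rather than a structure that odd spheres happily support, such as a Hopf-type field. I would first try the following. Take the set $M\subset\Sph^{n-1}$ of directions where the radial function of $L$ attains its maximum. Work with the bundle whose fiber over $u$ is the corresponding subset of $\Sph(u^\perp)$. Then compute the degree of the map $v\mapsto u$ restricted to a suitable section-like cycle, using that $u\in\Sph(v^\perp)$ is symmetric in $u$ and $v$. This symmetry forces the degree to equal $(-1)^{n+1}$ times itself, up to a sign that should contradict nontriviality.

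If that fails, I would fall back on Gromov's cohomological obstruction and the Bor--Hern\'andez-Lamoneda--Jim\'enez-Desantiago--Montejano classification of closed subgroups $G\subset\OO(n)$ admitting reductions of $T\Sph^n$. The remaining groups, the ones acting transitively on $\Sph^{n-1}$ or containing $\operatorname{SO}$-factors, would then be handled by showing that $L$ must be a ball.
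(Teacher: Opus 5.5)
You reduce to odd $n\geq3$ with $\dim X=n+1$ and set up the principal $G$-bundle, which matches the paper's starting point. But there is a genuine gap: the proposal stops where the actual proof has to begin. Step~2 and the ``main obstacle'' paragraph list strategies to try rather than giving an argument, so the odd codimension-one case, where all the remaining difficulty lies, is left unproved.

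The mechanisms you name also cannot work as stated.
\begin{itemize}
\item The antipodal map of $\Sph^n$ has degree $(-1)^{n+1}=+1$ for odd $n$. The relation ``degree $=(-1)^{n+1}\cdot$degree'' is therefore vacuous in exactly your case.
\item No obstruction that uses only the topology of the reduction (degrees, parities, characteristic classes) can succeed for $n=3,7$. There $T\Sph^n$ is trivial. Even if you use $K\cap u^\perp=K\cap(-u)^\perp$ to descend to $\R P^n$, the bundle $\gamma^\perp$ is isomorphic to $\gamma\otimes T\R P^n\cong n\gamma$, because $\R P^3$ and $\R P^7$ are parallelizable. This bundle reduces to $\{\pm I\}$, and $\{\pm I\}\subset G$ since $L=-L$. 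So every $G$, and every $G$-invariant structure on $\Sph^{n-1}$, is topologically realizable.
\item The $G$-invariant data you propose need not be finite: the maximizing set of $\rho_L$ is typically a union of positive-dimensional orbits.
\item Your fallback, Gromov's obstruction plus the classification of Bor et al., is exactly the previously known method. It handles only $n\equiv1\pmod{4}$, $n\geq5$, $n\neq133$.
\end{itemize}

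The missing idea is to use the bundle constructively and then bring in analysis, with no contradiction and no classification of $G$.
\begin{itemize}
\item Since $n-1$ is even, $\pi_{n-1}(G^\circ)$ is finite by Serre's theorem. After reducing to $G^\circ$, the bundle therefore becomes trivial when pulled back by a self-map $\phi$ of $\Sph^n$ of some degree $d\geq1$.
\item A Lipschitz regularization then gives a global Lipschitz family $A_z$ with $A_z(E)=\phi(z)^\perp$ and $A_z(S)=K\cap\phi(z)^\perp$.
\item Because $A_zy\perp\phi(z)$, the map $z\mapsto A_zy/|A_zy|$ is homotopic to $\phi$. Hence the cone map $F_y(x)=\cA(x)y$ covers $p_S(y)K$ with Brouwer degree $d$.
\item The change-of-variables formula with degree then gives
\[
\int_{\Ball^{n+1}}|F_y(x)|^{2k}\det DF_y(x)\dd x=d\,p_S(y)^{n+1+2k}\int_K|u|^{2k}\dd u,
\]
and the left side is a homogeneous polynomial in $y$.
\item The cases $k=0,1$ make $p_S^{n+1}$ and $p_S^{n+3}$ polynomials. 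Unique factorization then forces $p_S^2$ to be a quadratic form, so $S$, and hence $K$, is an ellipsoid.
\end{itemize}
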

	
	For finite-dimensional spaces, passing to unit balls gives the equivalent convex-geometric formulation.
	
	\begin{theorem}\label{cor:convex-main}
		Let $K$ be an origin-symmetric convex body in $\R^N$ and $2\leq n<N$. 
		If all $n$-dimensional central sections of $K$ are linearly equivalent, then $K$ is an ellipsoid.
	\end{theorem}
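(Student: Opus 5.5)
The statement is the convex-geometric reformulation of Theorem~\ref{thm:main}, so the plan is to derive it from that theorem by passing to unit balls. I will set $X=(\R^N,\|\cdot\|_K)$, where $\|x\|_K=\inf\{t>0: x\in tK\}$ is the Minkowski gauge of $K$. Because $K$ is an origin-symmetric convex body (compact, convex, with $0$ in its interior), this gauge is a genuine norm: symmetry gives $\|{-x}\|_K=\|x\|_K$, convexity gives the triangle inequality, and compactness together with $0\in\Int K$ gives positivity and finiteness. The closed unit ball of $X$ is exactly $K$.

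Next I translate the hypothesis. For an $n$-dimensional subspace $E\subset\R^N$, the unit ball of the normed space $(E,\|\cdot\|_K|_E)$ is the central section $K\cap E$. A linear isometry $T\colon E\to F$ between two such subspaces is a linear bijection with $\|Tx\|_K=\|x\|_K$, which is equivalent to $T(K\cap E)=K\cap F$. Conversely, if $T(K\cap E)=K\cap F$ for a linear bijection $T$, then $T$ preserves the gauges and is therefore an isometry. So ``all $n$-dimensional central sections are linearly equivalent'' is precisely the statement that all $n$-dimensional subspaces of $X$ are linearly isometric. Since $2\le n<N=\dim X$, Theorem~\ref{thm:main} applies and shows that $X$ is a Hilbert space.

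Finally, I identify the ball. Since $X$ is a Hilbert space, its norm comes from an inner product $\langle x,y\rangle_X=\langle Ax,y\rangle$ for some positive definite symmetric matrix $A$. Then
\[
K=\{x:\langle Ax,x\rangle\le 1\},
\]
which is an ellipsoid (the image of the Euclidean unit ball under $A^{-1/2}$). Equivalently, $X$ is linearly isometric to Euclidean $\R^N$, and $K$ is the image of the round ball under that isometry.

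There is no real obstacle in this deduction. The only points that need care are the verification that the gauge is a norm and the precise matching between linear equivalence of sections and linear isometry of subspaces. All the substantive difficulty lies in Theorem~\ref{thm:main} itself.
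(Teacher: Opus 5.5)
Your proposal is correct and follows the paper's own proof. The paper likewise equips $\R^N$ with the gauge $p_K$, notes that the hypothesis says exactly that all $n$-dimensional subspaces of $(\R^N,p_K)$ are linearly isometric, and applies Theorem~\ref{thm:main} to conclude that $K$ is an ellipsoid.
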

	
	In view of the codimension-one reduction, the odd-dimensional case of Theorem~\ref{thm:main} follows from the following hyperplane theorem.
	
	\begin{theorem}\label{thm:odd-hyperplane}
		Let $n\geq3$ be odd and let $K$ be an origin-symmetric convex body in $\R^{n+1}$. If all central hyperplane sections of $K$ are linearly equivalent, then $K$ is an ellipsoid.
	\end{theorem}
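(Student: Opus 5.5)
We will follow Gromov's strategy: after normalizing by John's ellipsoid, turn the hypothesis into a continuous structure over the sphere $\Sph^n$ of unit normals, and then use topology to force symmetry. The new ingredient for odd $n$ replaces Gromov's even-dimensional Euler class obstruction with a principal bundle reduction combined with a Brouwer degree computation.

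\textbf{Step 1 (normalization).} Fix a model section $L\subset\R^n$, an origin-symmetric convex body, and let $G=\{g\in\GL(n):gL=L\}$. This is a compact group, and after a linear change we may assume $G\subset\OO(n)$. We may also put $K$ in John position. The linear maps carrying $L$ onto the section $K\cap u^\perp$ then form a $G$-torsor $P_u$, and these assemble into a principal $G$-bundle $P\to\Sph^n$. This bundle is a reduction of structure group of the frame bundle of the tangent bundle $T\Sph^n$, since $u^\perp=T_u\Sph^n$. Continuity and local triviality of $P$ come from compactness of $G$ and the fact that sections vary continuously in the Hausdorff metric. If $G$ acts transitively on $\Sph^{n-1}$, then $L$ is a ball, every hyperplane section is an ellipsoid, and the classical result (Brunn/Busemann: all central sections are ellipsoids implies $K$ is an ellipsoid) finishes the proof. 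So the goal is to show that $G$ is transitive on $\Sph^{n-1}$.

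\textbf{Step 2 (topological constraint).} The compact subgroups of $\OO(n)$ fall into two classes: the transitive ones (classified by Montgomery--Samelson and Borel), and those lying in a proper closed subgroup fixing some structure. In the second case $G$ preserves a nontrivial structure on $\R^n$, for example a proper nonzero invariant subspace, an invariant finite set of lines, or a nonconstant invariant polynomial. Transporting this structure through $P$ yields a continuous field of such structures on $T\Sph^n$. For odd $n$, $\chi(\Sph^n)=0$, so Gromov's obstruction fails and line fields can exist. Instead we use the extra rigidity that the structure at $u$ is \emph{determined by the geometry of $K$}. Fix $G$-invariant data on $L$, such as the set of directions of maximal radius of $L$, or a point of the $G$-fixed set in $\R^n$ when one exists. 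These give a canonical continuous map $f:\Sph^n\to\Sph^n$, sending $u$ to a distinguished unit vector in $u^\perp$ (or to a finite set of such vectors). The origin symmetry of $K$ gives the equivariance $f(-u)=\pm f(u)$, and $f(u)\perp u$.

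\textbf{Step 3 (degree argument).} Since $f(u)\perp u$, $f$ is homotopic to the identity, so $\deg f=1$. On the other hand, the equivariance $f(-u)=\pm f(u)$ together with $n$ odd fixes the parity of the degree: an odd map has odd degree, and an even map has even degree. When $G$ is intransitive, we aim to construct $f$ that is even, or that factors through $\R P^n$ in a way that forces the degree to be even. The natural candidate: the sections through $u$ and through $-u$ coincide, so the canonical vector should satisfy $f(-u)=f(u)$ up to a choice of sign. We then refine the choice using the orientation data in $P$ to get degree $0$ or an even degree, which contradicts $\deg f=1$. Where $G$ has no fixed vector, we work instead with the Grassmannian-valued map of $G$-invariant subspaces and pass to a sphere bundle, again reaching a degree contradiction.

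\textbf{Main obstacle.} The hard step is Step 3 in full generality. We must extract, for \emph{every} intransitive compact $G\subset\OO(n)$, a canonical map of controlled parity, and we must rule out the borderline cases where the invariant structure is a finite set of lines and the sign ambiguity spoils the parity. We would first try to reduce to a minimal closed intransitive overgroup of $G$: a stabilizer of a line, of a subspace, or of an irreducible representation subspace as in the Borel list. We would then handle these cases separately, using the principal bundle classification (clutching maps $\Sph^{n-1}\to G$) to compute the relevant degree.
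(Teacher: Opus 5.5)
Your Steps 1 and 2 set up essentially the same principal $G$-bundle as the paper. The whole content of the proof sits in Step 3, which you do not carry out, and the parity mechanism you propose there cannot work.

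Since $L=-L$, the group $G$ always contains $-I$. So $G$ never fixes a nonzero vector, and your fixed-point case is empty. Moreover, anything canonically attached to the section $K\cap u^\perp$ is invariant under $v\mapsto -v$, so you only ever obtain lines or sets of lines, never a canonical vector.

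Take the simplest intransitive case, a body of revolution with $G=\{\pm1\}\times\OO(n-1)$. The transported structure is a continuous line field $\ell(u)\subset u^\perp$ with $\ell(-u)=\ell(u)$. Since $\Sph^n$ is simply connected, $\ell$ lifts to a unit vector field $f$. As $u\mapsto f(-u)$ is another lift, $f(-u)=\epsilon f(u)$ for a constant sign $\epsilon$. Since $f(u)\perp u$ forces $\degree f=1$, which is odd, we get $\epsilon=-1$, and nothing contradicts this whatever orientation data you add.

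Such fields genuinely exist. For a complex structure $J$ on $\R^{n+1}$, the field $\ell(u)=\R Ju$ has exactly these properties. Correspondingly, the reduction of the frame bundle of $T\Sph^n$ (or of the tautological hyperplane bundle over $\mathbb{R}P^n$) to $\{\pm1\}\times\OO(n-1)$ is topologically unobstructed for every odd $n$. Hence no argument using only the topology of $P$ and the antipodal symmetry can show that $G$ is transitive; geometric input about $K$ is unavoidable.

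For $n\equiv3\pmod 4$ even more reductions are unobstructed, since $\Sph^n$ then carries three orthonormal tangent fields. A case analysis over compact subgroups of $\OO(n)$ is essentially the route of Bor et al., and it left $n\equiv 3\pmod 4$ and $n=133$ open.

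The paper's missing idea is to never classify $G$ or look for invariant structures. After reducing to $G^\circ$, the bundle class lies in $\pi_{n-1}(G^\circ)$, which is finite because $n-1$ is even (Serre). Pulling back along a smooth $\phi:\Sph^n\to\Sph^n$ whose degree $d\geq1$ annihilates the class trivializes the bundle. A Lipschitz regularization then gives a global Lipschitz family $A_z:E\to\phi(z)^\perp$ with $A_z(S)=K\cap\phi(z)^\perp$.

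The geometry enters through Brouwer degree and the area formula, not parity. For $y\neq0$, the curve $z\mapsto A_zy$ lies on $p_S(y)\partial K$ and its direction is homotopic to $\phi$. So $F_y(x)=|x|A_{x/|x|}y$ has degree $d$ on $\Int(p_S(y)K)$, and the area formula with degree gives
\[
\int_{\Ball^{n+1}}|F_y(x)|^{2k}\det DF_y(x)\dd x=d\,p_S(y)^{n+1+2k}\int_K|u|^{2k}\dd u .
\]
The left side is a homogeneous polynomial in $y$ because $F_y$ is linear in $y$. Taking $k=0,1$ and using unique factorization (where $n+1$ even is used again) shows that $p_S^2$ is a quadratic form. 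This identity holds uniformly for every $G$. That is exactly what lets it separate the ellipsoid from bodies of revolution and the other configurations that topology alone cannot exclude.
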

	
	Our proof shares its bundle-theoretic starting point with Gromov~\cite{Gromov1967} and Bor et al.~\cite{BorEtAl2021}.
	Fix an $n$-dimensional space $E$ and a model section $S\subset E$.
As $u$ varies over $\Sph^n$, the isomorphisms
$$
A:E\longrightarrow u^\perp,
\qquad
A(S)=K\cap u^\perp,
$$
form a principal $\Aut(S)$-bundle over $\Sph^n$.
Gromov~\cite{Gromov1967} uses topological restrictions on reductions of the tautological bundle over a Grassmannian to obtain the Euclidean conclusion.
Bor et al.~\cite{BorEtAl2021} combine restrictions on reductions of the tangent frame bundle of $\Sph^n$ with compact Lie group theory to show that $S$ is linearly equivalent to a symmetric body of revolution. 
Both approaches require detailed information about the compact group actions permitted by the relevant bundle reductions. In contrast to these bundle-theoretic approaches, Ivanov, Mamaev and Nordskova~\cite{IMN2023} treat the case $n=3$ by a local differential-geometric argument.
	
	We use the bundle differently. We neither classify $\Aut(S)$ nor try to produce additional symmetries of $S$. 
	After reducing the structure group to $\Aut(S)^\circ$, the bundle class lies in	$\pi_{n-1}(\Aut(S)^\circ)$, which is finite since $n$ is odd.
	Choose a positive integer $d$ that annihilates the bundle class and a smooth self-map $\phi:\Sph^n\to\Sph^n$ of degree $d$. The pullback bundle is then topologically trivial. A Lipschitz regularization argument produces a global Lipschitz family of exact maps $A_z$. See  Theorem~\ref{cor:global-exact-family}. 
    
	We next turn to degree theory to show that $S$ is an ellipsoid.
    For every $y\neq0$, exactness places the map $z\mapsto A_z y$ on the level set $p_S(y)\partial K$. Since $A_z y\perp\phi(z)$, its normalized direction
    $$
    z\longmapsto\frac{A_z y}{|A_z y|}
    $$
    is homotopic to $\phi$ and therefore has degree $\degree\phi$. 
    Applying the signed degree formula to the homogeneous extension of $z\mapsto A_z y$ shows that $y\mapsto p_S(y)^{n+1+2k}$ is a polynomial for every $k\geq0$ as proved in Lemma~\ref{prop:polynomial-powers}. The cases $k=0,1$, together with unique factorization, imply that $p_S^2$ is quadratic.
    Hence $S$ is an ellipsoid. This yields the main results.
	
	This article is organized as follows. Section~\ref{sec:background} collects some basic facts on convex bodies, principal	bundles and degree theory. Section~\ref{sec:finite-moment-method} constructs and globalizes the section isometry bundle.
	Section~\ref{sec:degree-conclusion} develops the degree argument, proves the polynomial rigidity of the model norm and concludes the main results.

	\vskip 25pt
	\section{\bf Preliminaries}
	\label{sec:background}
	\vskip 5pt
	
	Except for the Banach space $X$ in Theorem~\ref{thm:main}, which may be infinite-dimensional, all vector spaces below are real and finite-dimensional. Whenever Euclidean notions are used, the spaces involved are equipped with fixed auxiliary inner products and, when needed, fixed orientations. Norms denoted by $|\cdot|$, orthogonality, and adjoints are taken with respect to these inner products, while determinants and degrees are computed with respect to the chosen orientations.
	
	On $\R^n$ we use the standard Euclidean structure and orientation, and write $\Ball^n$ and $\Sph^{n-1}$ for the unit ball and unit sphere. Let $\kappa_n$ denote the Euclidean volume of $\Ball^n$. For an $n$-dimensional Euclidean vector space $E$, we write $\Ball(E)$ and $\Sph(E)$ for its unit ball and unit sphere, let $\vol_n$ denote the induced Lebesgue measure, and let $\dd\theta$ denote surface measure on $\Sph(E)$. For a linear subspace $H\subset E$, we write $H^\perp$ for its orthogonal complement. We write $I_E$ for the identity operator on $E$, and $\GL(E)$ and $\OO(E)$ for its general linear and orthogonal groups.
	
	For finite-dimensional real vector spaces $E$ and $V$, we write $\Hom(E,V)$ for the vector space of linear maps from $E$ to $V$ and set $\End(E)=\Hom(E,E)$. Using the auxiliary inner products fixed above, we equip $\Hom(E,V)$ with the Frobenius inner product
    $$
        \langle A,B\rangle=\operatorname{tr}(A^*B),\qquad A,B\in\Hom(E,V).
    $$
    Here $A^*:V\to E$ denotes the adjoint of $A$. We denote the corresponding norm by $\|\cdot\|$.

    For a Lipschitz map $F$ between metric spaces, $\Lip(F)$ denotes its optimal Lipschitz constant.
	
	\subsection{Convex bodies}
	\label{sec:convex-moments}
	\ 
	
	A \emph{convex body} in a vector space $E$ is a compact convex set with nonempty interior. For a convex body $K$, we write $\Int K$ and $\partial K$ for its interior and boundary, respectively. Good references on convex bodies are the books by Gardner \cite{Gardner1} and Schneider \cite{Sch}.
	
	Let $K\subset E$ be an origin-symmetric convex body in a Euclidean vector space. Its \emph{Minkowski functional} (or \emph{gauge}) and \emph{radial function} are defined by
	\begin{align*}
		p_K(x)&=\inf\{t>0:x\in tK\},\quad\ \ \, x\in E,\\
		\rho_K(\theta)&=\max\{r\geq0:r\theta\in K\},\quad \theta\in\Sph(E).
	\end{align*}
	The gauge $p_K$ is a norm, and the body and its boundary are recovered as
	$$K=\{x:p_K(x)\leq1\},\qquad\partial K=\{x:p_K(x)=1\}.$$
	On $\Sph(E)$, $\rho_K=1/p_K$.
	
	Let $K\subset E$ and $L\subset V$ be origin-symmetric convex bodies. If $T:E\to V$ is a linear isomorphism, then $p_{TK}(Tx)=p_K(x)$ for every $x\in E$. Thus $T(K)=L$ if and only if $T:(E,p_K)\to(V,p_L)$ is a linear isometry. If $H\subset E$ is a linear subspace, then
    \begin{equation}\label{eq:section-gauge}
        p_{K\cap H}(x)=p_K(x),\qquad x\in H.
    \end{equation}
	
	\begin{lemma}\label{lem:convex-estimates}
		If $K\subset E$ is an origin-symmetric convex body, then $\rho_K$ is Lipschitz on $\Sph(E)$.
	\end{lemma}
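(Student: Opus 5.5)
The plan is to show first that the gauge $p_K$ is Lipschitz on all of $E$, and then to transfer this to $\rho_K=1/p_K$ on $\Sph(E)$. The only inputs are that $K$ contains a Euclidean ball around the origin and is itself bounded.

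\emph{Step 1: two-sided bounds on the gauge.} Since $K$ is an origin-symmetric convex body, it has nonempty interior. By symmetry and convexity the origin is then an interior point: if $x\in\Int K$, then $-x\in\Int K$, and $0$ is the midpoint of the segment joining them. Hence there is $r>0$ with $r\Ball(E)\subset K$. Compactness gives $R>0$ with $K\subset R\Ball(E)$. From the definition of $p_K$ these inclusions yield
$$
\frac{|x|}{R}\le p_K(x)\le \frac{|x|}{r},\qquad x\in E.
$$

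\emph{Step 2: the gauge is Lipschitz.} Since $p_K$ is a norm, the triangle inequality gives
$$
|p_K(x)-p_K(y)|\le p_K(x-y)\le \frac{|x-y|}{r}.
$$
So $p_K$ is $1/r$-Lipschitz on $E$, and in particular on $\Sph(E)$.

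\emph{Step 3: pass to the radial function.} On $\Sph(E)$ Step 1 gives $p_K\ge 1/R$. For $\theta,\eta\in\Sph(E)$ we therefore have
$$
|\rho_K(\theta)-\rho_K(\eta)|=\frac{|p_K(\eta)-p_K(\theta)|}{p_K(\theta)p_K(\eta)}\le \frac{R^2}{r}\,|\theta-\eta|.
$$
Here the distance on $\Sph(E)$ is the chordal Euclidean distance. For the geodesic distance the same bound holds, because the chordal distance never exceeds the geodesic one.

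None of these steps is a real obstacle. The only point needing care is that the origin lies in $\Int K$, and origin-symmetry settles this as explained in Step 1. Beyond that, the argument is a direct computation.
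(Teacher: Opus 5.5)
Your proof is correct and follows essentially the same route as the paper: you sandwich $K$ between two balls, bound $p_K$ above and below, and apply the reverse triangle inequality to $\rho_K=1/p_K$, arriving at the same constant $R^2/r$ (the paper's $b^2/a$). Your added justifications, that $0\in\Int K$ by symmetry and that chordal distance is dominated by geodesic distance, are details the paper leaves implicit.
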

	
	\begin{proof}
    Choose $0<a\leq b$ such that $a\Ball(E)\subset K\subset b\Ball(E)$. These inclusions imply
    $b^{-1}|x|\leq p_K(x)\leq a^{-1}|x|$ for every $x\in E$. Hence, for $\theta,\eta\in\Sph(E)$, the reverse triangle inequality gives
    $$
        |\rho_K(\theta)-\rho_K(\eta)|=\frac{|p_K(\theta)-p_K(\eta)|}{p_K(\theta)p_K(\eta)}
        \leq b^2p_K(\theta-\eta)\leq\frac{b^2}{a}|\theta-\eta|.
    $$
    Thus $\rho_K$ is Lipschitz on $\Sph(E)$.
    \end{proof}
	
	We next introduce the moment tensors used in Section~\ref{sec:finite-moment-method}. Fix an $n$-dimensional Euclidean vector space $E$. For $j\geq1$, let $E^{\otimes j}$ denote the $j$-fold tensor product of $E$, equipped with the tensor-product inner product, and let $\Sym^j(E)$ denote the subspace of $E^{\otimes j}$ fixed by all permutations of the tensor factors. For $x\in E$, we write
    $$
        x^{\otimes j}=\underbrace{x\otimes\cdots\otimes x}_{j\text{ factors}}\in\Sym^j(E),
        \qquad\|x^{\otimes j}\|=|x|^j.
    $$
    
	For an origin-symmetric convex body $S\subset E$, its \emph{normalized $j$th moment tensor} is
    $$
        M_j(S)=\frac{1}{\vol_n(S)}\int_S x^{\otimes j}\dd x\in\Sym^j(E).
    $$
    The symmetry $S=-S$ implies that $M_j(S)=0$ for odd $j$. Under the canonical isometric identification of $\Sym^2(E)$ with the self-adjoint operators on $E$, $M_2(S)$ corresponds to the covariance operator $C(S)$ characterized by
    $$
        \ip{C(S)u}{v}=\frac{1}{\vol_n(S)}\int_S\ip{x}{u}\ip{x}{v}\dd x,\qquad u,v\in E.
    $$
	
    For a linear map $T:E\to V$, let $T^{\otimes j}:E^{\otimes j}\to V^{\otimes j}$ denote its tensor power, defined on pure tensors by
    $$
        T^{\otimes j}(x_1\otimes\cdots\otimes x_j)=Tx_1\otimes\cdots\otimes Tx_j.
    $$
    It maps $\Sym^j(E)$ into $\Sym^j(V)$. For an isomorphism $T$, change of variables gives
    \begin{equation}\label{eq:moment-equivariance}
        M_j(TS)=T^{\otimes j}M_j(S),\qquad
        C(TS)=TC(S)T^*.
    \end{equation}
	
	\begin{lemma}\label{lem:moment-lipschitz}
        Let $j\geq1$ and $0<a\leq b$. If the origin-symmetric convex bodies $S_1,S_2\subset E$ satisfy $a\Ball(E)\subset S_1,S_2\subset b\Ball(E)$, then
        $$
            \|M_j(S_1)-M_j(S_2)\|\leq\frac{2nb^{n+j-1}}{a^n}\|\rho_{S_1}-\rho_{S_2}\|_\infty.
        $$
    \end{lemma}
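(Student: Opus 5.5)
We pass to polar coordinates and compare the two integrals direction by direction. For an origin-symmetric convex body $S\subset E$ with $\rho=\rho_S$,
$$
\int_S x^{\otimes j}\dd x=\int_{\Sph(E)}\int_0^{\rho(\theta)} r^{n+j-1}\theta^{\otimes j}\dd r\dd\theta=\frac{1}{n+j}\int_{\Sph(E)}\rho(\theta)^{n+j}\theta^{\otimes j}\dd\theta,
\qquad
\vol_n(S)=\frac1n\int_{\Sph(E)}\rho(\theta)^n\dd\theta .
$$
We write $I_i=\int_{S_i}x^{\otimes j}\dd x$ and $V_i=\vol_n(S_i)$, so that $M_j(S_i)=I_i/V_i$. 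We use the standard splitting
$$
M_j(S_1)-M_j(S_2)=\frac{I_1-I_2}{V_1}+I_2\Bigl(\frac1{V_1}-\frac1{V_2}\Bigr)=\frac{I_1-I_2}{V_1}+M_j(S_2)\,\frac{V_2-V_1}{V_1},
$$
and bound each term separately.

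\emph{Estimating the pieces.} Set $\delta=\|\rho_{S_1}-\rho_{S_2}\|_\infty$. Since $\|\theta^{\otimes j}\|=1$ and both radial functions lie in $[a,b]$, the mean value theorem gives $|\rho_1^{m}-\rho_2^{m}|\le m b^{m-1}\delta$ pointwise. With $\omega=\int_{\Sph(E)}\dd\theta=n\kappa_n$, this yields
$$
\|I_1-I_2\|\le\frac{1}{n+j}(n+j)b^{n+j-1}\delta\,\omega=b^{n+j-1}\omega\delta,
\qquad
|V_1-V_2|\le b^{n-1}\omega\delta .
$$
For the remaining factors, $V_1\ge a^n\kappa_n=a^n\omega/n$ because $a\Ball(E)\subset S_1$. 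Also $\|M_j(S_2)\|\le b^j$, since it is an average of tensors of norm $|x|^j\le b^j$.

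\emph{Combining.} Putting these together,
$$
\|M_j(S_1)-M_j(S_2)\|\le\frac{n b^{n+j-1}\omega\delta}{a^n\omega}+b^j\cdot\frac{n b^{n-1}\omega\delta}{a^n\omega}=\frac{2nb^{n+j-1}}{a^n}\delta,
$$
which is exactly the claimed bound.

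There is no real obstacle here. The only points needing care are that the factor $\frac1{n+j}$ in the polar formula exactly cancels the $n+j$ coming from the mean value theorem, and that the constants $\omega$ cancel between numerator and denominator. Measurability is not an issue, because $\rho_{S_i}$ is continuous by Lemma~\ref{lem:convex-estimates}.
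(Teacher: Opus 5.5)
Your proof is correct and follows essentially the same route as the paper's. Both use the polar-coordinate formulas for $V_i$ and $I_i$, the mean value bound $|s^m-t^m|\le mb^{m-1}|s-t|$, the splitting into $(I_1-I_2)/V_1$ plus a volume-difference term, and the bounds $V_1\ge a^n\kappa_n$ and $\|I_2\|\le b^jV_2$ (your $\|M_j(S_2)\|\le b^j$).
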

	
	\begin{proof}
		The assumed inclusions imply $a\leq\rho_{S_i}(\theta)\leq b$ for $i=1,2$ and $\theta\in\Sph(E)$. For $s,t\in[a,b]$ and every integer $m\geq1$, the mean value theorem gives $|s^m-t^m|\leq mb^{m-1}|s-t|$.

        Set $\delta=\|\rho_{S_1}-\rho_{S_2}\|_\infty$. For $i=1,2$, set $V_i=\vol_n(S_i)$ and $I_i=\int_{S_i}x^{\otimes j}\dd x$, so that $M_j(S_i)=I_i/V_i$. Using polar coordinates, the triangle inequality, and $\|\theta^{\otimes j}\|=1$ on $\Sph(E)$, we obtain
        \begin{align*}
            |V_1-V_2|&\leq\frac1n\int_{\Sph(E)}|\rho_{S_1}(\theta)^n-\rho_{S_2}(\theta)^n|\dd\theta\leq n\kappa_n b^{n-1}\delta,\\
            \|I_1-I_2\|&\leq\frac1{n+j}\int_{\Sph(E)}
            |\rho_{S_1}(\theta)^{n+j}-\rho_{S_2}(\theta)^{n+j}|\dd\theta\leq n\kappa_n b^{n+j-1}\delta.
        \end{align*}
        
		Finally, $V_i\geq a^n\kappa_n$ and $\|I_i\|\leq b^jV_i$ for $i=1,2$. The preceding estimates give
        \begin{align*}
            \|M_j(S_1)-M_j(S_2)\|\leq\frac{\|I_1-I_2\|}{V_1}
            +\frac{\|I_2\|\,|V_1-V_2|}{V_1V_2}\leq\frac{2nb^{n+j-1}}{a^n}\,\delta,
        \end{align*}
        as desired.
	\end{proof}
	
	For $j=2$, the lemma gives Lipschitz dependence of the covariance operator on the radial function under common inner and outer radius bounds.
	
	\begin{lemma}\label{lem:covariance-normalization}
        Let $S\subset E$ be an origin-symmetric convex body. Then $C(S)$ is positive definite, and $C\bigl(C(S)^{-1/2}S\bigr)=I_E$.
    \end{lemma}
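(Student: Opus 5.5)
The statement has two parts: positive definiteness of $C(S)$, and the normalization identity. Both follow directly from the definition of the covariance operator and the equivariance formula \eqref{eq:moment-equivariance}.

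\emph{Positive definiteness.} I would start from the defining identity with $u=v$:
$$
\ip{C(S)u}{u}=\frac{1}{\vol_n(S)}\int_S\ip{x}{u}^2\dd x\geq 0 .
$$
This shows $C(S)$ is self-adjoint and positive semidefinite; self-adjointness is also visible from the symmetry of the defining formula in $u$ and $v$. For $u\neq0$ the integrand $\ip{x}{u}^2$ is continuous and nonnegative. It vanishes only on the hyperplane $u^\perp$, which has Lebesgue measure zero. Since $S$ is a convex body, it has nonempty interior and hence positive measure, so the integral is strictly positive. Thus $C(S)$ is positive definite. Consequently $C(S)^{1/2}$ and $C(S)^{-1/2}$ are well defined, self-adjoint and positive definite via the spectral theorem.

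\emph{Normalization.} Set $T=C(S)^{-1/2}$. This is a linear isomorphism of $E$, and $TS$ is again an origin-symmetric convex body. Applying \eqref{eq:moment-equivariance} and using $T^*=T$ gives
$$
C(TS)=TC(S)T^*=C(S)^{-1/2}C(S)C(S)^{-1/2}=I_E .
$$
The last equality holds because $C(S)^{-1/2}$ commutes with $C(S)$, being a function of it under the functional calculus.

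No step here is a real obstacle. The only points needing care are the strictness argument, which uses that the hyperplane $u^\perp$ is Lebesgue-null while $S$ has positive volume, and the fact that the adjoint of $C(S)^{-1/2}$ is itself.
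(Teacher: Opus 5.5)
Your proof is correct and matches the paper's argument. You get strict positivity of $\int_S\ip{x}{u}^2\dd x$ for $u\neq0$ from the nullity of $u^\perp$ against $\vol_n(S)>0$, where the paper uses the nonempty open set $\Int S\setminus u^\perp$; the rest, applying the moment equivariance formula to the self-adjoint square root $C(S)^{-1/2}$, is the same.
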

	
	\begin{proof}
		For $v\neq0$, the set $\Int S\setminus v^\perp$ is nonempty and open. Hence
        \[
            \ip{C(S)v}{v}=\frac{1}{\vol_n(S)}\int_S\ip{x}{v}^2\dd x>0,
        \]
        so $C(S)$ is positive definite.

        Every positive-definite self-adjoint operator has a unique positive-definite square root~\cite[Chapter~1, item~(v)]{Bhatia2007}. Thus $C(S)^{-1/2}$ is well defined. By \eqref{eq:moment-equivariance},
        \[
            C\bigl(C(S)^{-1/2}S\bigr)=C(S)^{-1/2}C(S)C(S)^{-1/2}
            =I_E,
        \]
        as desired.
	\end{proof}
	
	\subsection{Principal bundles and classifying spaces}
	\label{sec:bundle-background}
	\ 
	
	We use right principal bundles throughout. One can refer to Husemoller~\cite{Husemoller1994} for more details.
    
	\begin{definition}\label{def:principal-bundle-conventions}
        Let $H$ be a topological group and let $Y$ be a topological space. A \emph{right principal $H$-bundle over $Y$} consists of a topological space $P$, a continuous surjection $\pi:P\to Y$, and a continuous right $H$-action on $P$ preserving the fiber $\pi^{-1}(y)$ for every $y\in Y$, such that $Y$ admits an open cover $\{U_i\}$ and, for each $i$, an $H$-equivariant homeomorphism
        \[
            \Phi_i:U_i\times H\longrightarrow\pi^{-1}(U_i),
            \qquad
            \pi(\Phi_i(y,h))=y.
        \]
        Here $H$ acts on $U_i\times H$ by right multiplication on the second factor. Such homeomorphisms are called \emph{local trivializations}.
    \end{definition}
	
	A \emph{local section} over an open set $U\subset Y$ is a continuous map $s:U\to P$ satisfying $\pi\circ s=\operatorname{id}_U$. A \emph{principal $H$-atlas} is a family $\{(U_i,s_i)\}$ such that $\{U_i\}$ is an open cover of $Y$ and each $s_i$ is a local section over $U_i$. Each $s_i$ determines a local trivialization $U_i\times H\to\pi^{-1}(U_i)$ given by $(y,h)\mapsto s_i(y)h$. On $U_i\cap U_j$, the transition function $t_{ji}:U_i\cap U_j\to H$ is uniquely determined by
    \[
        s_i(y)t_{ji}(y)=s_j(y).
    \]
    The bundle is \emph{trivial} if it is $H$-equivariantly isomorphic over $Y$ to $Y\times H$. Equivalently, it admits a global section.
	
	For a continuous map $f:Z\to Y$, the pullback bundle $f^*P\to Z$ is
    \[
        f^*P=\{(z,p)\in Z\times P:f(z)=\pi(p)\},
    \]
    with projection $(z,p)\mapsto z$ and right action $(z,p)h=(z,ph)$.

    If $H'\subset H$ is a closed subgroup, a reduction of the structure group to $H'$ is a principal $H'$-subbundle of $P$. If $W$ is a topological space with a continuous left $H$-action, the associated bundle is
    \[
        P\times_H W=(P\times W)/{\sim},
        \qquad
        (ph,w)\sim(p,hw).
    \]
    Since $\pi(ph)=\pi(p)$, the map $(p,w)\mapsto\pi(p)$ is constant on equivalence classes and hence induces the bundle projection $P\times_H W\to Y$.
	
	A \emph{classifying space} for $H$, denoted by $BH$, is the base of a universal principal $H$-bundle $EH\to BH$ whose total space $EH$ is contractible. If $Y$ is a CW complex, every principal $H$-bundle $P\to Y$ is isomorphic to $c^*EH$ for a map $c:Y\to BH$ that is unique up to free homotopy~\cite[Chapter~4, Sections~12--13]{Husemoller1994}. Such a map is called a classifying map for $P$.
	
	\begin{lemma}\label{lem:bundle-classification}
        Let $H$ be a path-connected Lie group and $k\geq2$. Principal $H$-bundles over $\Sph^k$ are classified by $\pi_k(BH)\cong\pi_{k-1}(H)$, with the trivial bundle corresponding to zero.
    \end{lemma}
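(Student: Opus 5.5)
We will derive Lemma~\ref{lem:bundle-classification} from the classification theorem recalled just above: since $\Sph^k$ is a CW complex, isomorphism classes of principal $H$-bundles over $\Sph^k$ correspond bijectively to free homotopy classes $[\Sph^k,BH]$, with the trivial bundle corresponding to the class of a constant map. The argument has three steps. First, identify free homotopy classes with based ones. Second, identify the resulting based classes with $\pi_k(BH)$. Third, pass to $\pi_{k-1}(H)$ using the long exact sequence of the universal bundle.

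\textbf{Step 1: $BH$ is simply connected.} Apply the long exact homotopy sequence to the fibration $H\to EH\to BH$. Since $EH$ is contractible, the segment
\[
\pi_1(EH)\to\pi_1(BH)\to\pi_0(H)\to\pi_0(EH)
\]
shows that $\pi_1(BH)$ injects into $\pi_0(H)$, which is trivial because $H$ is path-connected. Also $BH$ is path-connected, being the image of the path-connected space $EH$.

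\textbf{Step 2: free versus based classes.} For a path-connected space $Z$, the forgetful map $\pi_k(Z)\to[\Sph^k,Z]$ is surjective. It identifies $[\Sph^k,Z]$ with the orbit set of the $\pi_1(Z)$-action on $\pi_k(Z)$. By Step~1 this action is trivial, so $[\Sph^k,BH]\cong\pi_k(BH)$. Under this identification the constant map corresponds to $0$, so the trivial bundle corresponds to zero. This is the only point needing care: it uses the standard fact that free homotopy classes are the $\pi_1$-orbits, which holds because $BH$ can be chosen to be a CW complex, or at least to have the homotopy type of one, for a Lie group $H$.

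\textbf{Step 3: the connecting homomorphism.} In the long exact sequence of $H\to EH\to BH$, the groups $\pi_k(EH)$ and $\pi_{k-1}(EH)$ vanish. Hence the connecting map
\[
\partial:\pi_k(BH)\to\pi_{k-1}(H)
\]
is an isomorphism for $k\geq2$, and it sends $0$ to $0$. When $k-1=1$, the group $\pi_1(H)$ is abelian because $H$ is a topological group, which is consistent with $\pi_2(BH)$ being abelian.

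Composing the identifications of Steps~2 and~3 gives the classification, with the trivial bundle corresponding to $0\in\pi_{k-1}(H)$. One may also note the clutching description: a bundle over $\Sph^k$ is trivial over each of the two hemispheres, and the transition function restricted to the equator $\Sph^{k-1}$ gives the class in $\pi_{k-1}(H)$. This is the form used later, but it is not needed for the proof.
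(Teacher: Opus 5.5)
Your proposal is correct and follows the same route as the paper. Both use the long exact sequence of $H\to EH\to BH$ with $EH$ contractible to get $\pi_1(BH)=0$ and $\partial:\pi_k(BH)\cong\pi_{k-1}(H)$, then use simple connectivity to identify free homotopy classes $[\Sph^k,BH]$ with $\pi_k(BH)$, and note that the constant classifying map gives the product bundle. One small remark: the orbit-set description of $[\Sph^k,Z]$ needs only that $Z$ is path-connected, because the homotopy extension property is used on the domain pair $(\Sph^k,\text{pt})$; your appeal to a CW structure on $BH$ is therefore unnecessary, though harmless.
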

	
	\begin{proof}
        By the homotopy exact sequence of $H\to EH\to BH$~\cite[Chapter~1, Theorem~5.3 and Chapter~4, Section~11]{Husemoller1994} and the contractibility of $EH$, we have
        \[
            \pi_k(BH)\cong\pi_{k-1}(H),
            \qquad
            \pi_1(BH)\cong\pi_0(H)=0.
        \]
        Thus $BH$ is simply connected, so free homotopy classes of maps $\Sph^k\to BH$ are naturally identified with $\pi_k(BH)$. The classification property of $BH$ now gives the claimed classification. The constant classifying map represents zero, and its pullback is the product bundle.
    \end{proof}
	
	Classifying maps are natural under pullback. If $P\cong c^*EH$ and $f:Z\to Y$, then
    \[
        f^*P\cong f^*c^*EH\cong(c\circ f)^*EH.
    \]
    Thus $c\circ f$ is a classifying map for $f^*P$.
	
	We also need the following standard finiteness fact.
	
	\begin{lemma}\label{lem:even-homotopy-finite}
		If $H$ is a compact connected Lie group and $k>0$ is even, then
		$\pi_k(H)$ is finite.
	\end{lemma}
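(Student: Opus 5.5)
The plan is to reduce to rational homotopy. A compact connected Lie group has the rational homotopy type of a product of odd-dimensional spheres, and finite generation then gives finiteness in even degrees. The steps are as follows.

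First, $\pi_k(H)$ is finitely generated for every $k\geq1$. Indeed, $H$ is a compact smooth manifold, hence homotopy equivalent to a finite CW complex. Since $H$ is a path-connected topological group, $\pi_1(H)$ is abelian and acts trivially on the higher homotopy groups, so $H$ is a simple space. By Serre's theorem, a simple finite CW complex has finitely generated homotopy groups. It therefore suffices to show that $\pi_k(H)\otimes\mathbb{Q}=0$ for even $k>0$.

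Second, we compute the rational cohomology. By Hopf's theorem, $H^*(H;\mathbb{Q})$ is a Hopf algebra that is finite-dimensional, graded-commutative and connected. The Hopf--Borel structure theorem then gives
\[
H^*(H;\mathbb{Q})\cong\Lambda_{\mathbb{Q}}(x_1,\dots,x_r),
\qquad \deg x_i \text{ odd},
\]
an exterior algebra on primitive generators of odd degree. Finite-dimensionality rules out polynomial generators in even degrees.

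Third, we pass from cohomology to homotopy. An H-space is rationally equivalent to a product of Eilenberg--MacLane spaces $\prod K(\mathbb{Q},n_i)$; equivalently, its Sullivan minimal model has zero differential. For $H$ the minimal model is therefore $\Lambda(x_1,\dots,x_r)$ with $d=0$. Since $H$ is simple and of finite type, $\pi_k(H)\otimes\mathbb{Q}$ is dual to the indecomposables of the minimal model in degree $k$. These indecomposables are concentrated in odd degrees, so $\pi_k(H)\otimes\mathbb{Q}=0$ for even $k$.

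An alternative is to write $H\cong (T\times G')/F$, where $G'$ is compact simply connected, $T$ is a torus and $F$ is a finite central subgroup. Then $\pi_k(H)\cong\pi_k(G')$ for $k\geq2$, and one would invoke the classical computation of the rationalization of $G'$ as a product of odd spheres.

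Finally, a finitely generated abelian group whose rationalization vanishes is finite, which proves the lemma.

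The main obstacle is the third step, the passage from cohomology to rational homotopy. It relies on the standard fact that H-spaces are rationally products of Eilenberg--MacLane spaces, which I would cite (Serre, Cartan) rather than reprove. Since the paper treats this lemma as standard, the proof should consist of these citations.
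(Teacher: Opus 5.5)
Your argument is correct, but your main route differs from the paper's. The paper's proof is essentially your ``alternative.'' It uses the finite covering homomorphism $T\times H_{\mathrm{ss}}\to H$, with $T$ a torus and $H_{\mathrm{ss}}$ compact, connected, simply connected and semisimple. Since a covering induces isomorphisms on $\pi_k$ for $k\geq 2$ and $\pi_k(T)=0$, this gives $\pi_k(H)\cong\pi_k(H_{\mathrm{ss}})$. The paper then cites Serre's finiteness theorem in even degrees for $\pi_k(H_{\mathrm{ss}})$.

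Your main route avoids the Lie-theoretic structure theorem. You use only two facts about $H$:
\begin{enumerate}
\item $H$ is a simple space homotopy equivalent to a finite CW complex, which gives finite generation. Simplicity is genuinely needed here, since a finite complex such as $S^1\vee S^2$ with nontrivial $\pi_1$-action has a non-finitely-generated $\pi_2$.
\item Hopf's theorem together with the rational splitting of H-spaces, which kills $\pi_k\otimes\mathbb{Q}$ in even degrees.
\end{enumerate}
In effect you unpack the content of the Serre result the paper cites. That result rests on the Cartan--Serre theorem: rational homotopy of an H-space is the primitive part of rational homology, and Hopf's theorem places it in odd degrees.

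Your version is more general, applying to any connected H-space with the homotopy type of a finite complex, but it uses heavier rational-homotopy machinery (Sullivan models or Cartan--Serre). The paper's version is shorter: a standard structural reduction to the simply connected case, followed by a single classical citation.
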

	
	\begin{proof}
        There is a finite covering homomorphism $T\times H_{\mathrm{ss}}\to H$, where $T$ is a torus and $H_{\mathrm{ss}}$ is compact, connected, simply connected and semisimple~\cite[Section~2.9]{BorelHirzebruch1958}. Since $k\geq2$, this covering induces an isomorphism on $\pi_k$, and $\pi_k(T)=0$. Hence
        \[
            \pi_k(H)\cong\pi_k(T\times H_{\mathrm{ss}})
            \cong\pi_k(T)\oplus\pi_k(H_{\mathrm{ss}})
            =\pi_k(H_{\mathrm{ss}}).
        \]
        The group $\pi_k(H_{\mathrm{ss}})$ is finite by Serre's finiteness theorem in even degrees~\cite[Chapter~V, Section~3, Corollary~2]{Serre1953}.
    \end{proof}
	
	\subsection{Degree theory}
    \label{sec:degree-background}\

	In this part, we recall some facts on Brouwer degree. One can refer to Fonseca and Gangbo~\cite{FonsecaGangbo1995} for more details.
    \begin{definition}\label{def:degree-conventions}
        Let $\Omega\subset\R^n$ be bounded and open, let $F:\overline\Omega\to\R^n$ be continuous, and let $v\notin F(\partial\Omega)$. We write $\degree(F,\Omega,v)$ for the \emph{Brouwer degree} of $F$ on $\Omega$ at $v$. If $F$ is $C^1$ on $\Omega$ and $v$ is a regular value, then
        \[
            \degree(F,\Omega,v)
            =\sum_{x\in F^{-1}(v)}\operatorname{sgn}\det DF(x).
        \]
        We orient every sphere as the boundary of its Euclidean unit ball. For $n\geq2$ and a continuous map $h:\Sph^{n-1}\to\Sph^{n-1}$, we write $\degree h$ for the integer determined by
        \[
            h_*[\Sph^{n-1}]=(\degree h)[\Sph^{n-1}]
        \]
        in top homology.
    \end{definition}
	
	For Brouwer degree, we will use homotopy invariance, vanishing outside the image, the composition rule, and invariance under orientation-preserving changes of coordinates~\cite[Chapters~1-2]{FonsecaGangbo1995}. For sphere degree, we will use homotopy invariance and multiplicativity~\cite[Section~2.2]{Hatcher2002}.
	
	\begin{lemma}\label{lem:degree-precomposition}
        Let $n\geq2$, let $\phi:\Sph^n\to\Sph^n$ be a based map of degree $d$, and let $Y$ be a based space. Then, for every based map $f:\Sph^n\to Y$,
        \[
            [f\circ\phi]=d[f]
            \qquad\text{in }\pi_n(Y).
        \]
    \end{lemma}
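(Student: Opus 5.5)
The plan is to reduce everything to the identity map: show first that $[\phi]=d\,[\mathrm{id}_{\Sph^n}]$ in $\pi_n(\Sph^n)$, and then push this identity forward along $f$.

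\emph{Step 1: $\pi_n(\Sph^n)\cong\mathbb{Z}$ via degree.} Since $n\geq2$, the sphere $\Sph^n$ is simply connected, so based homotopy classes of maps $\Sph^n\to\Sph^n$ coincide with free homotopy classes. By the Hopf degree theorem, these free classes are classified by degree. Alternatively, use Hurewicz: $\pi_n(\Sph^n)\to H_n(\Sph^n)\cong\mathbb{Z}$ is an isomorphism, sending $[g]$ to $(\degree g)[\Sph^n]$. Consequently $[\phi]=d\,[\mathrm{id}]$ in $\pi_n(\Sph^n)$, because both sides are sent to $d$ by the Hurewicz map. This uses that the Hurewicz map is a homomorphism and that $\degree\mathrm{id}=1$.

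\emph{Step 2: postcomposition is a homomorphism.} The map $f_*:\pi_n(\Sph^n)\to\pi_n(Y)$, given by $[g]\mapsto[f\circ g]$, is a group homomorphism. Indeed, the sum in $\pi_n$ is defined by pinching the source sphere, and postcomposition commutes with pinching: $f\circ(g_1+g_2)=(f\circ g_1)+(f\circ g_2)$ holds on the nose. Therefore
\[
[f\circ\phi]=f_*[\phi]=f_*\bigl(d[\mathrm{id}]\bigr)=d\,f_*[\mathrm{id}]=d[f].
\]
Here $d[\mathrm{id}]$ means the $d$-fold sum, taken with inverses when $d<0$. The case $d=0$ is included, since then $\phi$ is based null-homotopic and so is $f\circ\phi$.

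The main point requiring care is Step 1. We need the based classification, not just the free one, and we need to know that the degree recorded by Definition~\ref{def:degree-conventions} agrees with the Hurewicz image. I would cite the Hurewicz theorem, in the form that $\pi_n(\Sph^n)\to H_n(\Sph^n)$ is an isomorphism for $n\geq1$. The identification $[g]\mapsto g_*[\Sph^n]$ is exactly the degree. Everything else in the argument is formal.
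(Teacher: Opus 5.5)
Your proof is correct and matches the paper's proof: both use the degree (Hurewicz) isomorphism $\pi_n(\Sph^n)\cong\mathbb{Z}$ to get $[\phi]=d[\mathrm{id}_{\Sph^n}]$, then apply the homomorphism $f_*$ to obtain $[f\circ\phi]=d[f]$. Your additional checks (based versus free classes, agreement of the degree with the Hurewicz image, and the cases $d\leq 0$) are accurate. They only make explicit what the paper's citation already covers.
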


    \begin{proof}
        Under the degree isomorphism $\pi_n(\Sph^n)\cong\mathbb Z$, one has $[\phi]=d[\operatorname{id}_{\Sph^n}]$~\cite[Section~4.1 and Corollary~4.25]{Hatcher2002}. Since $f_*$ is a homomorphism, $[f\circ\phi]=f_*[\phi]=d f_*[\operatorname{id}_{\Sph^n}]=d[f]$.
    \end{proof}
	
	\begin{lemma}\label{lem:radial-cone-degree}
        Let $n\geq2$ and let $h:\Sph^{n-1}\to\Sph^{n-1}$ be continuous. Define $C_h:\Ball^n\to\Ball^n$ by
        \[
            C_h(0)=0,
            \qquad
            C_h(t\theta)=th(\theta),
            \quad 0<t\leq1,\quad \theta\in\Sph^{n-1}.
        \]
        Then for each $v\in\Int\Ball^n$,
        \[
            \degree(C_h,\Int\Ball^n,v)=\degree h.
        \]
    \end{lemma}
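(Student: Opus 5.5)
The approach is to show that the Brouwer degree of the cone map $C_h$ is independent of $v\in\Int\Ball^n$, and then compute it at a single point $v$ by relating it to the homological degree of $h$.

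\emph{Step 1: well-definedness and constancy in $v$.} The map $C_h$ is continuous on $\Ball^n$. Away from the origin this is clear. At the origin we have $|C_h(t\theta)|=t\to0$. On the boundary $\partial\Ball^n=\Sph^{n-1}$ we have $C_h(\theta)=h(\theta)\in\Sph^{n-1}$, so $v\notin C_h(\partial\Ball^n)$ for every $v\in\Int\Ball^n$ and $\degree(C_h,\Int\Ball^n,v)$ is defined. The open ball $\Int\Ball^n$ is connected and disjoint from $C_h(\partial\Ball^n)$. Since Brouwer degree is locally constant in $v$ on the complement of $C_h(\partial\Omega)$, it is constant on $\Int\Ball^n$. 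It therefore suffices to compute it at one point, say $v=0$, or at any convenient $v$.

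\emph{Step 2: reduction to smooth $h$.} Approximate $h$ uniformly by a smooth map $\tilde h:\Sph^{n-1}\to\Sph^{n-1}$ with $|h-\tilde h|_\infty<1$; for example, smooth $h$ into $\R^n\setminus\{0\}$ and then normalize. The straight-line homotopy $h_s=\frac{(1-s)h+s\tilde h}{|(1-s)h+s\tilde h|}$ never passes through $0$. The induced cone maps $C_{h_s}$ give a homotopy on $\Ball^n$ whose boundary values stay on $\Sph^{n-1}$, so they avoid any $v\in\Int\Ball^n$. By homotopy invariance of Brouwer degree, and of sphere degree since $h\simeq\tilde h$, we may assume $h$ is smooth.

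\emph{Step 3: computation via regular values.} For smooth $h$, Sard's theorem gives a regular value $w\in\Sph^{n-1}$ of $h$. Take $v=\tfrac12 w$. The preimages of $v$ under $C_h$ are exactly the points $\tfrac12\theta$ with $h(\theta)=w$. Away from the origin, $C_h$ is smooth, and in the coordinates $(t,\theta)\in(0,1]\times\Sph^{n-1}$ it is the product map $(t,\theta)\mapsto(t,h(\theta))$ followed by polar coordinates. The polar-coordinate map is an orientation-preserving diffeomorphism onto $\Ball^n\setminus\{0\}$, with the sphere oriented as the boundary of the ball, using the outward radial direction first. So at $\tfrac12\theta$ the sign of $\det DC_h$ equals the sign of the local degree of $h$ at $\theta$. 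Summing gives
\[
\degree(C_h,\Int\Ball^n,v)=\sum_{h(\theta)=w}\operatorname{sgn}\det Dh(\theta)=\degree h,
\]
where the last equality is the standard identification of the homological degree with the signed count of preimages of a regular value~\cite[Section~2.2]{Hatcher2002}.

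\emph{Main obstacle.} The delicate points are the orientation bookkeeping in the polar identification and the justification that the homological degree equals the signed preimage count. For the latter I would cite the local-degree formula in Hatcher. As an alternative to Steps 2 and 3, one could invoke the known fact that $\degree(F,\Int\Ball^n,v)$ equals the degree of $F|_{\Sph^{n-1}}$ viewed as a map into $\R^n\setminus\{v\}\simeq\Sph^{n-1}$. Applied to $C_h$, whose boundary restriction is $h$, this gives the result immediately.
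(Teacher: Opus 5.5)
Your proof is correct, but your main route is not the paper's. The paper's proof is essentially the ``alternative'' you mention at the end.

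The paper applies the boundary characterization of Brouwer degree to get $\degree(C_h,\Int\Ball^n,v)=\degree\bigl((h-v)/|h-v|\bigr)$ for each fixed $v$. It then uses the homotopy $\theta\mapsto(h(\theta)-tv)/|h(\theta)-tv|$, $0\leq t\leq1$, which is valid since $|h(\theta)-tv|\geq1-t|v|>0$, to identify this with $\degree h$. That homotopy is exactly the step your ``immediately'' skips.

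What each route buys:
\begin{itemize}
\item The paper's route treats continuous $h$ and every $v$ at once. It needs no constancy-in-$v$ step, no smoothing, no Sard, and no orientation bookkeeping. The whole bridge between analytic and homological degree is outsourced to the cited proposition.
\item Your Steps 1--3 use only the regular-value formula for Brouwer degree, homotopy invariance, and the regular-value (local degree) formula for the homological degree of a smooth sphere map. This makes the bridge explicit, at the cost of more verification.
\end{itemize}

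One detail to add. $C_h$ is in general not differentiable at the origin, while the regular-value formula is stated for maps that are $C^1$ on $\Omega$. Since $|C_h(x)|=|x|$ and $|v|=\tfrac12$, excision lets you replace $\Int\Ball^n$ by $\Int\Ball^n\setminus\varepsilon\Ball^n$ for some $\varepsilon<\tfrac12$. There $C_h(x)=|x|\,h(x/|x|)$ is smooth. Your computation in the oriented chart $(t,\theta)\mapsto t\theta$, with derivative $\operatorname{diag}(1,Dh(\theta))$, then goes through as written.
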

	
	\begin{proof}
        Let $v\in\Int\Ball^n$. By the boundary characterization of Brouwer degree \cite[Proposition~1.27]{FonsecaGangbo1995},
        \[
            \degree(C_h,\Int\Ball^n,v)
            =\degree\left(\frac{h-v}{|h-v|}\right).
        \]
        The maps $\theta\mapsto(h(\theta)-tv)/|h(\theta)-tv|$, $0\leq t\leq1$, form a homotopy from $h$ to $(h-v)/|h-v|$, since $|h(\theta)-tv|\geq1-t|v|>0$. Hence the right-hand side equals $\degree h$.
    \end{proof}
	
	\begin{lemma}\label{prop:signed-degree-formula}
        Let $D\subset\R^m$ be open and let $\Omega\Subset D$ be open with $\vol_m(\partial\Omega)=0$. If $F:D\to\R^m$ is Lipschitz, then for each bounded Borel function $\varphi:\R^m\to\R$,
        \[
            \int_\Omega \varphi(F(x))\det DF(x)\dd x
            =\int_{\R^m}\varphi(v)\degree(F,\Omega,v)\dd v.
        \]
    \end{lemma}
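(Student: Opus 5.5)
The statement is the Lipschitz form of the change-of-variables (area) formula with multiplicity given by Brouwer degree. I would prove it in two stages: first for smooth $F$, where the degree is a signed count of preimages, and then for Lipschitz $F$ by mollification.

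\emph{Smooth case.} Assume first that $F$ is $C^1$ on a neighbourhood of $\overline\Omega$, and that $\varphi$ is continuous with compact support in $\R^m\setminus F(\partial\Omega)$. By Sard's theorem, almost every $v$ is a regular value. For such $v$ the set $F^{-1}(v)\cap\Omega$ is finite, and $\degree(F,\Omega,v)=\sum_{x\in F^{-1}(v)\cap\Omega}\operatorname{sgn}\det DF(x)$. I would cover the set $\{\det DF\neq0\}\cap\Omega$ by countably many open sets on which $F$ is a diffeomorphism onto its image. On each piece the classical change of variables applies, and summing with a partition of unity gives
\[
\int_\Omega\varphi(F)\det DF\dd x=\int_{\R^m}\varphi(v)\sum_{x\in F^{-1}(v)\cap\Omega}\operatorname{sgn}\det DF(x)\dd v .
\]
The critical set contributes nothing to the left side, because $\det DF=0$ there. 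Its image has measure zero by Sard, so it contributes nothing to the right side either.

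\emph{Approximation.} Now let $F$ be only Lipschitz. Take mollifications $F_\varepsilon=F*\eta_\varepsilon$. These are defined near $\overline\Omega$ because $\Omega\Subset D$. They converge to $F$ uniformly on $\overline\Omega$ with $\Lip(F_\varepsilon)\le\Lip(F)$, and $DF_\varepsilon\to DF$ almost everywhere by Rademacher's theorem and Lebesgue differentiation. Hence $\det DF_\varepsilon\to\det DF$ almost everywhere with a uniform bound, and dominated convergence handles the left side for continuous $\varphi$. For the right side, take $v$ with $\operatorname{dist}(v,F(\partial\Omega))>\|F_\varepsilon-F\|_\infty$. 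The straight-line homotopy between $F_\varepsilon$ and $F$ then avoids $v$ on $\partial\Omega$, so homotopy invariance gives $\degree(F_\varepsilon,\Omega,v)=\degree(F,\Omega,v)$. This proves the identity for $\varphi\in C_c(\R^m\setminus F(\partial\Omega))$.

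\emph{Extension to all bounded Borel $\varphi$.} This is where I expect the main obstacle.
\begin{itemize}
\item Since $F$ is Lipschitz, $F(\partial\Omega)$ has measure zero because $\vol_m(\partial\Omega)=0$. Also $F^{-1}(F(\partial\Omega))\cap\Omega$ contributes nothing to the left side, since by the area formula $\det DF=0$ almost everywhere on the preimage of a null set. So we may restrict to the open set $U=\R^m\setminus F(\partial\Omega)$.
\item I need $\degree(F,\Omega,\cdot)$ to be locally integrable on $U$, or at least integrable there. From the smooth case with $|\varphi|$ one gets the bound $|\degree(F_\varepsilon,\Omega,v)|\le\#\bigl(F_\varepsilon^{-1}(v)\cap\Omega\bigr)$. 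The area formula gives $\int\#\bigl(F_\varepsilon^{-1}(v)\cap\Omega\bigr)\dd v=\int_\Omega|\det DF_\varepsilon|\le\Lip(F)^m\vol_m(\Omega)$, and Fatou's lemma transfers this bound to $F$. This makes $\degree(F,\Omega,\cdot)\in L^1(\R^m)$; it vanishes outside the bounded set $F(\overline\Omega)$.
\item With integrability in hand, both sides are finite signed measures in $\varphi$. They agree on $C_c(U)$, and $U$ has full measure. A monotone class argument then extends the identity to all bounded Borel $\varphi$.
\end{itemize}
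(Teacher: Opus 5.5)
Your proof is correct, but it takes a different route from the paper. The paper does not prove the formula itself; it only checks hypotheses and cites a theorem. It picks $D_0$ with $\overline\Omega\subset D_0\Subset D$ and uses Rademacher to get $|\det DF|\le\Lip(F)^m$ a.e., so $\det DF\in L^1(D_0)$. It uses the area formula to get $\vol_m(F(\partial\Omega))=0$. It then applies the degree change-of-variables theorem of Fonseca--Gangbo (Remark~5.26(ii), Theorem~5.27) to $F|_{D_0}$.

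You instead reprove that cited theorem, in three stages:
\begin{enumerate}
\item the $C^1$ case, via Sard and local inverses;
\item the Lipschitz case for $\varphi\in C_c(\R^m\setminus F(\partial\Omega))$, via mollification and homotopy invariance of the degree;
\item the bounded Borel case, by treating both sides as finite signed measures that agree on $C_c(U)$ and give zero mass to $F(\partial\Omega)$. On the left side this uses $\det DF=0$ a.e.\ on $F^{-1}(F(\partial\Omega))\cap\Omega$.
\end{enumerate}
The two facts the paper checks are exactly the inputs your argument uses. The nullity of $F(\partial\Omega)$ lets you reduce to $U$, and the bound on $\det DF$ drives dominated convergence and the integrability of the degree. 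Your version is self-contained apart from Sard, Rademacher, the area formula and basic degree theory, and it shows why $\vol_m(\partial\Omega)=0$ is needed. The paper's version is three lines but depends entirely on the reference.

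One small simplification of your second bullet. Once the identity holds on $C_c(U)$, you have $\bigl|\int\varphi\,\degree(F,\Omega,\cdot)\bigr|\le\|\varphi\|_\infty\int_\Omega|\det DF|$ for every $\varphi\in C_c(U)$. Since the degree is locally bounded on $U$, this already gives $\|\degree(F,\Omega,\cdot)\|_{L^1(U)}\le\int_\Omega|\det DF|$. You then need neither Fatou's lemma nor the area formula for the mollified maps $F_\varepsilon$.
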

	
	\begin{proof}
        Choose a bounded open set $D_0$ such that $\overline\Omega\subset D_0\Subset D$. By Rademacher's theorem~\cite[Theorem~3.2]{EvansGariepy2015}, $F$ is differentiable almost everywhere in $D_0$ and $|\det DF|\leq\Lip(F)^m$ almost everywhere. Hence $\det DF\in L^1(D_0)$. The area formula~\cite[Theorem~3.8]{EvansGariepy2015} implies that Lipschitz maps send null sets to null sets, so $\vol_m(F(\partial\Omega))=0$. Applying the change-of-variables formula with degree~\cite[Remark~5.26(ii) and Theorem~5.27]{FonsecaGangbo1995} to $F|_{D_0}$ and $\Omega$ gives the desired identity.
\end{proof}
	
	\vskip 25pt
	\section{\bf The isometry bundle and its globalization}
	\label{sec:finite-moment-method}
	\label{sec:global-exact-construction}
	\vskip 5pt
	
	The main purpose of this section is to prove  Theorem \ref{cor:global-exact-family}. We first construct finite-moment coordinates that detect the exact symmetries of a model body. We then apply those coordinates to the moving hyperplane sections, assemble the resulting local exact maps into a Lipschitz principal bundle, remove its topological obstruction, and regularize a trivializing section without losing exactness.
	
	\subsection{Finite-moment detection and orbit coordinates}
	\label{sec:moment-orbit}
	\ 
	
	Let $E$ be an $n$-dimensional Euclidean vector space and let $S\subset E$ be an origin-symmetric convex body. By Lemma~\ref{lem:covariance-normalization}, $C(S)$ is positive definite. Replacing $S$ by the linearly equivalent body $C(S)^{-1/2}S$ changes none of the questions at issue and gives the normalization
	\begin{equation}\label{eq:covariance-normalization}
		C(S)=I_E.
	\end{equation}
	Set
	$$
	G=\Aut(S)=\{g\in\GL(E):gS=S\}.
	$$
	
	\begin{lemma}\label{lem:finite-moments}
		The group $G$ is a compact subgroup of $\OO(E)$. Moreover, there exist positive integers $j_1,\ldots,j_s$ such that $G$ is exactly the stabilizer in $\OO(E)$ of the finite tuple
		$$
		\mathbf M(S)=\bigl(M_{j_1}(S),\ldots,M_{j_s}(S)\bigr).
		$$
	\end{lemma}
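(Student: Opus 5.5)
First I show that $G\subset\OO(E)$. If $g\in G$, then $gS=S$, and the covariance equivariance \eqref{eq:moment-equivariance} gives $C(S)=C(gS)=gC(S)g^*$. With the normalization \eqref{eq:covariance-normalization} this reads $gg^*=I_E$, so $g\in\OO(E)$. Next, $G$ is closed in $\GL(E)$. Indeed, if $g_k\to g$ with $g_kS=S$, then $g\in\OO(E)$ by closedness of $\OO(E)$, and $gS$ is the Hausdorff limit of $g_kS=S$, so $gS=S$. Hence $G$ is a closed subgroup of the compact group $\OO(E)$ and is compact. By Cartan's closed subgroup theorem it is also a Lie subgroup.

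For the moment characterization, let $G_J\subset\OO(E)$ be the stabilizer of $(M_j(S))_{j\in J}$ for a finite set $J$ of even integers. By \eqref{eq:moment-equivariance}, $g\in\OO(E)$ fixes $M_j(S)$ if and only if $M_j(gS)=M_j(S)$. Hence $G\subset G_J$ for every $J$, and each $G_J$ is a closed subgroup of $\OO(E)$.

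I first show that the intersection over all $j$ equals $G$. Suppose $g\in\OO(E)$ satisfies $M_j(gS)=M_j(S)$ for all even $j$; odd moments vanish for both bodies anyway. Pairing with $v^{\otimes j}$ gives
\[
\frac{1}{\vol_n(S)}\int_S\ip{x}{v}^j\dd x=\frac{1}{\vol_n(gS)}\int_{gS}\ip{x}{v}^j\dd x
\]
for all $v$, and $\vol_n(gS)=\vol_n(S)$ since $g$ is orthogonal. By polarization, all polynomial moments of the indicator functions $\mathbf 1_S$ and $\mathbf 1_{gS}$ agree. Both are compactly supported bounded functions inside a common ball, so Weierstrass density of polynomials in $C(b\Ball(E))$ forces $\mathbf 1_S=\mathbf 1_{gS}$ almost everywhere. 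Since both are convex bodies, and a convex body equals the closure of its interior, this gives $gS=S$.

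To reduce to finitely many moments, I use the descending chain condition for closed subgroups of a compact Lie group. Set $J_m=\{2,4,\ldots,2m\}$. Then $G_{J_m}$ is a decreasing sequence of closed subgroups of $\OO(E)$. In a strictly decreasing chain of closed subgroups, at each step either the dimension drops or, with the dimension fixed, the number of components of the identity component's finite-index overgroup drops. This is the standard Noetherian property of compact Lie groups. Hence the sequence stabilizes at some $m_0$, and then $G_{J_{m_0}}=\bigcap_m G_{J_m}=G$. I take $(j_1,\ldots,j_s)=(2,4,\ldots,2m_0)$.

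The main obstacle is justifying this stabilization step rigorously. If the chain-condition citation seems too terse, I would argue directly. The identity components form a decreasing chain of connected Lie subgroups, so their dimension is eventually constant and they eventually coincide, say equal to $K_0$. The groups $G_{J_m}$ then all lie in the normalizer of $K_0$ and contain $K_0$ as a finite-index subgroup, so the indices $[G_{J_m}:K_0]$ form a nonincreasing sequence of positive integers and eventually stabilize. Since the groups are nested, equal index above the same $K_0$ gives equality of the groups.
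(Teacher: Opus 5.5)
Your proof is correct. On compactness of $G$, and on the fact that the full sequence of moments determines $S$, you follow the paper: your indicator-function and closure-of-interior argument is the paper's Stone--Weierstrass plus supports argument.

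The genuine difference is the reduction to finitely many moments.

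\textbf{The paper's route.} It argues algebraically. The coordinates of $Q^{\otimes j}M_j(S)-M_j(S)$ are polynomials in the matrix entries $q_{ab}$. Since $\R[q_{ab}]$ is Noetherian, finitely many of these polynomials generate the ideal of all of them. Hence finitely many moment equations cut out the same subset of $\OO(E)$.

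\textbf{Your route.} You use the descending chain condition for closed subgroups of a compact Lie group. Along the nested stabilizers $G_{J_m}$, the identity components stabilize because their dimensions stabilize. After that, the finite indices over the common identity component stabilize. So the chain is eventually constant, and its value is $\bigcap_m G_{J_m}=G$. Your direct justification of this chain condition is sound.

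\textbf{Comparison.} Your argument avoids commutative algebra, but it relies essentially on three things: the group structure, the nesting, and compactness. Compactness matters because the descending chain condition fails for closed subgroups of noncompact groups, e.g.\ $2^k\mathbb Z\subset\R$. The paper's argument needs none of these; it only uses that the conditions are polynomial, so it applies verbatim to any family of polynomial constraints. Your version has the small bonus of producing the tuple explicitly as $(M_2,M_4,\dots,M_{2m_0})$. The paper's version can be arranged that way too, simply by including all even degrees up to the largest one selected.
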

	
	\begin{proof}
		If $g\in G$, then \eqref{eq:moment-equivariance} and
		\eqref{eq:covariance-normalization} give
		$$
		I_E=C(S)=C(gS)=gC(S)g^*=gg^*.
		$$
		Hence $g\in\OO(E)$. The group $G$ is closed in $\OO(E)$ and is therefore compact.
		
		It remains to prove the finite-moment assertion. We divide the argument into two steps.
		
		\textbf{Step 1.} Show that $G$ is the common stabilizer of all moment tensors.
		
		By \eqref{eq:moment-equivariance}, every $g\in G$ satisfies $g^{\otimes j}M_j(S)=M_j(S)$ for each $j\geq 1$. That is, $G$ is contained in the common stabilizer in $\OO(E)$ of all the moment tensors of $S$. 
		
		For the reverse inclusion, assume $Q\in\OO(E)$ preserves every moment tensor of $S$. By \eqref{eq:moment-equivariance}, the normalized uniform measures on $S$ and $QS$ then have the same integral against every polynomial. By the Stone--Weierstrass theorem~\cite[Theorem~7.32]{Rudin1976}, polynomials are uniformly dense in $C(S\cup QS)$. The uniqueness of finite Radon measures from their integrals against continuous functions therefore shows that the two measures agree. Their supports are respectively $S$ and $QS$, so $QS=S$ and therefore $Q\in G$.
		
		\textbf{Step 2.} Reduce the infinite family of moment equations to finitely many of them.
		
		Choose matrix coordinates $Q=(q_{ab})_{a,b=1}^n$ on
		$\OO(E)\subset\R^{n^2}$. Every coordinate of
		$$Q^{\otimes j}M_j(S)-M_j(S)$$
		is a polynomial in the variables $q_{ab}$. Let $I$ be the ideal generated by all these coordinate polynomials. Since $\R[q_{ab}:1\leq a,b\leq n]$ is Noetherian~\cite[Corollary~7.6]{AtiyahMacdonald1969}, we can choose finite generators $f_1,\ldots,f_r$ of $I$. Each $f_i$ is a finite polynomial combination of these coordinate polynomials. If $J$ is generated by the coordinate polynomials occurring in these finitely many combinations, then
		$$
		(f_1,\ldots,f_r)\subset J\subset I=(f_1,\ldots,f_r),
		$$
		so $J=I$. Grouping these finitely many coordinate equations by tensor degree gives $j_1,\ldots,j_s$. The corresponding moment tensors therefore have common stabilizer $G$ by Step~1.
	\end{proof}
	
	Let
	$$
	\mathcal W=\bigoplus_{i=1}^s\Sym^{j_i}(E),
	$$
	equipped with the Euclidean direct-sum inner product induced by that of $E$. Then $\mathbf M(S)\in\mathcal W$. We denote its $\OO(E)$-orbit in
	$\mathcal W$ by $\OO(E)\mathbf M(S)$.
	
	\begin{lemma}\label{prop:moment-orbit}
		For the finite tuple from Lemma~\ref{lem:finite-moments}, the orbit map
		$$
		\Theta:\OO(E)/G\longrightarrow\OO(E)\mathbf M(S),
		\qquad
		\Theta(QG)=Q\mathbf M(S),
		$$
		is a diffeomorphism onto a compact embedded submanifold of $\mathcal W$. Its inverse is locally Lipschitz with respect to the Euclidean distance of $\mathcal W$.
	\end{lemma}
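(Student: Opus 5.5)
The plan is to treat this as an instance of the standard orbit theorem for a smooth action of a compact Lie group on a Euclidean space. The group $\OO(E)$ acts linearly and isometrically on $\mathcal W$ through $Q\mapsto\bigoplus_i Q^{\otimes j_i}$; this action is smooth, indeed polynomial in the matrix entries. By Lemma~\ref{lem:finite-moments}, the stabilizer of $\mathbf M(S)$ is exactly $G$. Since $G$ is a closed subgroup of the compact Lie group $\OO(E)$, it is a Lie subgroup, and $\OO(E)/G$ is a compact smooth manifold. The orbit map $Q\mapsto Q\mathbf M(S)$ is smooth and constant on cosets, so it descends to a smooth map $\Theta$ on $\OO(E)/G$. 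This map is injective precisely because the stabilizer equals $G$.

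The second step is to show that $\Theta$ is an immersion. I will use equivariance. For the standard reason, the differential of the orbit map at the identity has kernel equal to the Lie algebra of the stabilizer $G$. Translating by the group action, which commutes with $\Theta$ up to the linear isometry $Q$, shows that $\Theta$ has injective differential at every point of $\OO(E)/G$.

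Since $\OO(E)/G$ is compact and $\mathcal W$ is Hausdorff, an injective continuous map from it is a homeomorphism onto its image. An injective immersion that is a homeomorphism onto its image is an embedding. Hence the orbit $\OO(E)\mathbf M(S)$ is a compact embedded submanifold, and $\Theta$ is a diffeomorphism onto it.

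The main obstacle is the Lipschitz assertion. The inverse $\Theta^{-1}$ is smooth on the orbit, but it must be Lipschitz with respect to the ambient Euclidean distance of $\mathcal W$, not the intrinsic distance on the orbit. I will fix any Riemannian metric on $\OO(E)/G$. Then I will show that for each point $w_0$ of the orbit there is a neighborhood $U$ in which $\Theta^{-1}$ is Lipschitz with respect to the ambient distance. I will do this in a submanifold chart of $\mathcal W$ around $w_0$, which exists because the orbit is embedded. In that chart the orbit is a graph over its tangent space, so there is a smooth local retraction (nearest-point projection) of a neighborhood of $w_0$ in $\mathcal W$ onto the orbit. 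Composing $\Theta^{-1}$ with this retraction gives a smooth map defined on an open ball of $\mathcal W$, which is Lipschitz on a smaller convex ball. Restricting back to the orbit gives the local Lipschitz bound.

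If a global constant is needed, I will argue by compactness. Suppose the ratio $d(\Theta^{-1}w,\Theta^{-1}w')/|w-w'|$ were unbounded. Take pairs of points realizing larger and larger ratios and pass to convergent subsequences. If the limits of $w$ and $w'$ differ, injectivity and continuity bound the ratio. If the limits coincide, the local bound applies. Either way we get a contradiction.
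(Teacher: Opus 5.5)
Your proposal is correct and follows essentially the same route as the paper. In both, $\Theta$ is smooth and injective because the stabilizer is $G$. Injectivity of $d\Theta$ at the identity coset is transported by equivariance; the paper makes your ``standard reason'' explicit via $\rho(\exp t\xi)\mathbf M(S)=\exp(t\,d\rho(\xi))\mathbf M(S)$. Compactness then gives the embedding, and a local graph chart gives the Lipschitz bound.

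The differences are cosmetic. The paper gets $|v-v'|\le|x-x'|$ from orthogonal projection onto the tangent space, whereas you extend $\Theta^{-1}$ by a retraction. Your compactness upgrade to a global Lipschitz constant goes beyond what the statement asks.
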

	
	\begin{proof}
		Lemma~\ref{lem:finite-moments} identifies $G$ with the stabilizer of $\mathbf M(S)$ in $\OO(E)$. By the closed subgroup theorem and the quotient manifold theorem, $\OO(E)/G$ is a smooth compact manifold, and $\Theta$ is a smooth bijection onto the orbit~\cite[Theorems~20.12 and 21.10]{Lee2013}.
		
		We now show that $\Theta$ is an embedding. Let $\mathfrak o(E)$ and $\mathfrak g$ be the Lie algebras of $\OO(E)$ and $G$, respectively, and let $\rho$ be the direct-sum tensor representation on $\mathcal W$. Thus
		$$
		\rho(Q)(T_i)_{i=1}^s=(Q^{\otimes j_i}T_i)_{i=1}^s,
		\quad\text{and}\quad
		\Theta(QG)=\rho(Q)\mathbf M(S).
		$$
		Using $T_G(\OO(E)/G)\cong\mathfrak o(E)/\mathfrak g$, we obtain
		$$d\Theta_G([\xi])=d\rho(\xi)\mathbf M(S),
		\qquad \xi\in\mathfrak o(E).$$
		If $d\Theta_G([\xi])=0$, then
		$$\rho(\exp(t\xi))\mathbf M(S)=\exp\bigl(t\,d\rho(\xi)\bigr)\mathbf M(S)=\mathbf M(S)$$
		for every $t$. Hence $\exp(t\xi)\in G$  and therefore $\xi\in\mathfrak g$. Thus $d\Theta_{G}$ is injective. The equivariance identity
		$$
		\Theta\circ L_Q=\rho(Q)\circ\Theta,
		$$
		where $L_Q$ denotes left translation, then gives injectivity at every $QG$. Consequently, $\Theta$ is an injective immersion. Since $\OO(E)/G$ is compact and $\mathcal W$ is Hausdorff, $\Theta$ is an embedding and hence a diffeomorphism onto its orbit.
		
		It remains to prove the metric assertion. Fix a Riemannian distance $d_{\OO(E)/G}$. Given $m\in\OO(E)\mathbf M(S)$, write $\OO(E)\mathbf M(S)$ near $m$ as a smooth graph
		$$
		\Gamma(v)=m+v+\psi(v),
		\qquad v\in U_0\subset T_m(\OO(E)\mathbf M(S)).
		$$
		Choose a convex neighborhood $U\Subset U_0$ of $0$. For $v,v'\in U$, put $x=\Gamma(v)$ and $x'=\Gamma(v')$. Orthogonal projection onto $T_m(\OO(E)\mathbf M(S))$ gives $|v-v'|\leq|x-x'|$. Since $F=\Theta^{-1}\circ\Gamma$ is smooth on $U_0$,
		$$
		C=\max_{\overline U}\|dF\|<\infty.
		$$
		Integration along the segment from $v$ to $v'$ gives
		$$
		d_{\OO(E)/G}\bigl(\Theta^{-1}(x),\Theta^{-1}(x')\bigr)
		\leq C|v-v'|
		\leq C|x-x'|.
		$$
		Thus $\Theta^{-1}$ is locally Lipschitz for the ambient distance of $\mathcal W$.
	\end{proof}
	
	The finite tuple now contains all the symmetry information needed below. Lemma~\ref{lem:finite-moments} identifies $G=\Aut(S)$ as its exact stabilizer, while Lemma~\ref{prop:moment-orbit} shows that the corresponding coset in $\OO(E)/G$ is a locally Lipschitz function of the tuple. In the next subsection, we will apply these facts to a family of linearly equivalent hyperplane sections. The goal is to construct local exact maps from a fixed model section to the varying sections and then assemble these maps into a Lipschitz principal $G$-bundle.
	
	\subsection{The local isometry bundle}
    \
	
	All Lipschitz statements below use fixed Riemannian distances on the base manifolds, restricted to subsets when necessary.
	
	\begin{definition}\label{def:lipschitz-principal-bundle}
		Let $P\to Y$ be a principal $G$-bundle, where $Y$ is compact and smooth and $G$ is a compact matrix group. A finite principal $G$-atlas $\{(U_i,s_i)\}$ is called \emph{Lipschitz} if every transition function
		$$
		t_{ji}:U_i\cap U_j\longrightarrow G,
		\qquad
		s_i(x)t_{ji}(x)=s_j(x),
		$$
		is Lipschitz on the whole overlap. A section $s$ is called Lipschitz if its coordinate maps $g_i:U_i\to G$, determined by $s|_{U_i}=s_i g_i$, are Lipschitz.
	\end{definition}
	
	For the remainder of this section, let $n\geq2$ and let $K\subset\R^{n+1}$ be an origin-symmetric convex body whose central hyperplane sections are pairwise linearly equivalent.  Fix $u_0\in\Sph^n$, put $E=u_0^\perp$, and choose a model section $S\subset E$ linearly equivalent to every section $S_u=K\cap u^\perp$. Normalize $S$ as above, so that $C(S)=I_E$. Recall that $G=\Aut(S)\subset\OO(E)$.
	
	For $u\in\Sph^n$, let
	$$
	\cP_u=\{A\in\Hom(E,\R^{n+1}):A(E)=u^\perp,\ A(S)=S_u\}.
	$$
	The group $G$ acts freely and transitively on $\cP_u$ by right composition.
	
	We now construct exact local sections of the disjoint union $\cP=\bigsqcup_{u\in\Sph^n}\cP_u$.
	
	\begin{lemma}\label{prop:local-exact-atlas}
		The family $\cP\to\Sph^n$ admits a principal $G$-bundle structure with a finite Lipschitz principal $G$-atlas. More precisely, every $u_*\in\Sph^n$ has a connected neighborhood $U$ and a Lipschitz map $u\mapsto A_u\in\Hom(E,\R^{n+1})$ satisfying
		$$
		A_u(E)=u^\perp
		\quad\text{and}\quad
		A_u(S)=S_u.
		$$
		The transition functions of a finite family of such charts are globally Lipschitz on their overlaps.
	\end{lemma}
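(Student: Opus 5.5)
Near $u_*$, I will transport $u^\perp$ to $E$ by a smooth rotation, normalize the transported section by its covariance so its moment tuple lands on the orbit $\OO(E)\mathbf M(S)$, and then invert that orbit with $\Theta^{-1}$. First I fix a smooth local frame: on a small geodesic ball $U$ around $u_*$ there is a smooth map $u\mapsto R_u\in\OO(n+1)$ with $R_u u_0=u$. For instance, take a smooth section of $\OO(n+1)\to\Sph^n$ over $U$, or the rotation in the plane spanned by $u_*$ and $u$ composed with a fixed rotation taking $u_0$ to $u_*$. Then $B_u=R_u|_E:E\to u^\perp$ is a smooth isometric isomorphism. I pull the section back to $E$ by setting $T_u=B_u^{-1}S_u\subset E$.

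Next I show $u\mapsto\rho_{T_u}$ is Lipschitz in sup norm. Here $\rho_{T_u}(\theta)=\rho_K(R_u\theta)$ for $\theta\in\Sph(E)$, $\rho_K$ is Lipschitz on $\Sph^n$ by Lemma~\ref{lem:convex-estimates}, and $R_u$ is smooth in $u$. The bodies $T_u$ also satisfy uniform bounds $a\Ball(E)\subset T_u\subset b\Ball(E)$, inherited from $K$. So Lemma~\ref{lem:moment-lipschitz} makes every $M_j(T_u)$, and in particular $C(T_u)$, Lipschitz in $u$.

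I then normalize by covariance. Set $N_u=C(T_u)^{-1/2}T_u$, so $C(N_u)=I_E$ by Lemma~\ref{lem:covariance-normalization}. Since $C(T_u)$ ranges over a compact set of positive-definite operators, $u\mapsto C(T_u)^{-1/2}$ is Lipschitz, because the inverse square root is smooth there. By \eqref{eq:moment-equivariance}, $\mathbf M(N_u)=\rho(C(T_u)^{-1/2})\mathbf M(T_u)$ is Lipschitz in $u$.

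The key point is that $N_u$ lies in the $\OO(E)$-orbit of $S$. Indeed $N_u=LS$ for some $L\in\GL(E)$ because the sections are linearly equivalent. Then $I_E=C(LS)=LL^*$, so $L\in\OO(E)$. Hence $\mathbf M(N_u)\in\OO(E)\mathbf M(S)$, and Lemma~\ref{prop:moment-orbit} gives a coset $\Theta^{-1}(\mathbf M(N_u))=Q_uG$ that is Lipschitz in $u$. By Lemma~\ref{lem:finite-moments}, $Q_uS$ has the same moment tuple as $N_u$, so $Q_u^{-1}L$ stabilizes $\mathbf M(S)$, lies in $G$, and $Q_uS=N_u$ for every representative of the coset.

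The main obstacle is to lift $u\mapsto Q_uG$ to a Lipschitz map into $\OO(E)$. Shrinking $U$, I will use a smooth local section $\sigma$ of the smooth principal bundle $\OO(E)\to\OO(E)/G$ near $Q_{u_*}G$, for example the exponential of a complement $\mathfrak m$ of $\mathfrak g$. I set $Q_u=\sigma(\Theta^{-1}(\mathbf M(N_u)))$, which is Lipschitz as a composition of a smooth map with a Lipschitz map. I take $U$ to be a small ball, so it is connected. Then
$$
A_u=B_u\,C(T_u)^{1/2}Q_u
$$
is Lipschitz, satisfies $A_u(E)=u^\perp$, and gives $A_uS=B_uC(T_u)^{1/2}N_u=B_uT_u=S_u$.

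Finally I cover $\Sph^n$ by finitely many such balls, using compactness. The transition function on an overlap is $t_{ji}(u)=A_{i,u}^{-1}A_{j,u}\in G$, interpreted via $A_{i,u}:E\to u^\perp$. To make it globally Lipschitz on the whole overlap, I shrink each chart so that the maps are defined and Lipschitz on slightly larger closed balls. On the compact closure, $\|A_{i,u}^{-1}\|$ is bounded uniformly, and a product of bounded Lipschitz maps is Lipschitz. So $t_{ji}$ is Lipschitz on the full overlap.

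The local sections $u\mapsto A_u$ give the local trivializations $(u,g)\mapsto A_ug$ of $\cP$. Each is bijective fiberwise because $G$ acts simply transitively on $\cP_u$. This yields the principal $G$-bundle structure, with topology induced from $\Hom(E,\R^{n+1})$.
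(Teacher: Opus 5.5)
Your proposal is correct and follows the paper's proof essentially step for step: isometric pull-back of $S_u$ to $E$, covariance normalization into $\OO(E)S$, inversion of the moment-orbit map $\Theta$, a smooth local section of $\OO(E)\to\OO(E)/G$, the same formula for $A_u$, and Lipschitz transition functions on slightly shrunken charts. The differences are cosmetic: you get the isometry $E\to u^\perp$ from a local section of $\OO(n+1)\to\Sph^n$ rather than the paper's $J_u=\Pi_uR_*(R_*^*\Pi_uR_*)^{-1/2}$, and where you write $A_{i,u}^{-1}A_{j,u}$ the paper uses the left inverse $(A_{i,u}^*A_{i,u})^{-1}A_{i,u}^*$, which makes precise what it means for $A_{i,u}^{-1}$ to depend Lipschitz on $u$ when its domain $u^\perp$ varies.
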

	
	\begin{proof}
	    We divide the proof into three steps.
	
	    \textbf{Step 1.} Pull the varying sections back to $E$ and normalize their covariance.
	
	    Fix $u_*\in\Sph^n$ and choose a Euclidean isometry $R_*:E\to u_*^\perp$. For $u\in\Sph^n$, let $\Pi_u$ be the orthogonal projection of $\R^{n+1}$ onto $u^\perp$. Since $R_*^*\Pi_{u_*}R_*=I_E$, choose a connected neighborhood $U$ of $u_*$ with compact closure such that $R_*^*\Pi_uR_*$ is positive definite on a neighborhood of $\overline U$. For $u\in U$, set
	    $$
	    J_u=\Pi_u R_*\bigl(R_*^*\Pi_u R_*\bigr)^{-1/2}.
	    $$
	    Moreover,
	    $$
        J_u^*J_u=\bigl(R_*^*\Pi_uR_*\bigr)^{-1/2}R_*^*\Pi_uR_*\bigl(R_*^*\Pi_uR_*\bigr)^{-1/2}=I_E.
	    $$
	    Thus $J_u:E\to u^\perp$ is a Euclidean isometry and $u\mapsto J_u$ is smooth on a neighborhood of $\overline U$ and hence Lipschitz on $U$.
	
	    Define the pulled-back section $\widetilde S_u=J_u^{-1}S_u$. For $\theta\in\Sph(E)$, we have $\rho_{\widetilde S_u}(\theta)=\rho_K(J_u\theta)$. The radial function of $K$ is Lipschitz by Lemma~\ref{lem:convex-estimates}, so this identity shows that $u\mapsto\rho_{\widetilde S_u}$ is Lipschitz in the uniform norm. Since each $J_u$ is an isometry, the bodies $\widetilde S_u$ have common inner and outer radii. Lemma~\ref{lem:moment-lipschitz} therefore implies that $C_u=C(\widetilde S_u)$ depends Lipschitz-continuously on $u$.
	
	    By Lemma~\ref{lem:covariance-normalization}, each $C_u$ is positive definite. Since $\overline U$ is compact, the operators $C_u$ remain in a compact subset of the open cone of positive-definite self-adjoint operators on $E$. Since the maps $T\mapsto T^{\pm1/2}$ are smooth on this cone, $u\mapsto C_u^{\pm1/2}$ is Lipschitz.
	
	    Put $\widehat S_u=C_u^{-1/2}\widetilde S_u$. By Lemma~\ref{lem:covariance-normalization}, $C(\widehat S_u)=I_E$. For each fixed $u$, the body $\widehat S_u$ is linearly equivalent to $S$, so there exists $T\in\GL(E)$ such that $\widehat S_u=TS$. By \eqref{eq:moment-equivariance} and \eqref{eq:covariance-normalization},
	    \[
	    I_E=C(\widehat S_u)=TC(S)T^*=TT^*.
	    \]
	    Thus $T\in\OO(E)$, and hence $\widehat S_u\in\OO(E)S$.
	
	    \textbf{Step 2.} Lift the moment-orbit coordinates to Lipschitz orthogonal maps.
	
	    Let $\mathbf M$ be the finite moment tuple from Lemma~\ref{lem:finite-moments}. For every selected degree $j$, \eqref{eq:moment-equivariance} gives
	    \[
	    M_j(\widehat S_u)=(C_u^{-1/2})^{\otimes j}M_j(\widetilde S_u).
	    \]
	    By Step~1, $u\mapsto C_u^{-1/2}$ is Lipschitz and bounded, while Lemma~\ref{lem:moment-lipschitz} shows that $u\mapsto M_j(\widetilde S_u)$ is Lipschitz. Hence the right-hand side depends Lipschitz-continuously on $u$. Step~1 also gives $\widehat S_u\in\OO(E)S$, and hence $\mathbf M(\widehat S_u)\in\OO(E)\mathbf M(S)$. Therefore $u\mapsto\mathbf M(\widehat S_u)$ is a Lipschitz map into the orbit $\OO(E)\mathbf M(S)$.
	
	    By Lemma~\ref{prop:moment-orbit}, the map
$$
u\longmapsto
\Theta^{-1}\bigl(\mathbf M(\widehat S_u)\bigr)
$$
into $\OO(E)/G$ is locally Lipschitz. After shrinking $U$, its image lies in a relatively compact geodesically convex neighborhood $V$ on which the quotient map $\OO(E)\to\OO(E)/G$ admits a smooth local section $\sigma$. 
The section $\sigma$ is Lipschitz on $V$, and hence
$$
Q_u=
\sigma\!\big(\Theta^{-1}(\mathbf M(\widehat S_u))\big)
$$
defines a Lipschitz map $U\to\OO(E)$. By the definition of $\Theta$, the coset
$
\Theta^{-1}\bigl(\mathbf M(\widehat S_u)\bigr)
$
is the unique coset $QG$ such that $QS=\widehat S_u$.
Since $Q_u$ represents this coset, it follows that
$Q_uS=\widehat S_u$.

	    Undoing the normalization and returning to $u^\perp$, define
	    \[
	    A_u=J_u C_u^{1/2}Q_u.
	    \]
	    Then $u\mapsto A_u$ is Lipschitz and $A_u(E)=u^\perp$. Moreover,
	    $$
	    A_u(S)=J_uC_u^{1/2}\widehat S_u=J_u\widetilde S_u=S_u.
	    $$
	
	    \textbf{Step 3.} Assemble the local maps into a finite Lipschitz principal $G$-atlas.
	
	    Choose finitely many connected open sets $U_i$ covering $\Sph^n$, with each $\overline{U_i}$ contained in a larger neighborhood on which a local map $A_i$ constructed above is defined. On $U_i\cap U_j$, there is a unique $t_{ji}(u)\in G$ such that $A_j(u)=A_i(u)t_{ji}(u)$, and
	    $$
	    t_{ji}(u)=\bigl(A_i(u)^*A_i(u)\bigr)^{-1}A_i(u)^*A_j(u).
	    $$
	    The smallest singular value of $A_i(u)$ is bounded below uniformly for $u\in\overline{U_i}$. Consequently, $u\mapsto\bigl(A_i(u)^*A_i(u)\bigr)^{-1}$ is bounded and Lipschitz on $U_i$. All factors in the displayed formula are therefore bounded and Lipschitz on $U_i\cap U_j$, so $t_{ji}$ is globally Lipschitz there.
	
	    It remains to define the bundle topology on $\cP$. For each $i$, define
	    $$
	    \Phi_i:U_i\times G\longrightarrow\cP|_{U_i},
	    \qquad
	    \Phi_i(u,g)=\bigl(u,A_i(u)g\bigr).
	    $$
	    This map is bijective since $G$ acts freely and transitively on each fiber $\cP_u$. On $U_i\cap U_j$, the change of coordinates is
	    $$
	    \bigl(\Phi_i^{-1}\circ\Phi_j\bigr)(u,g)
	    =\bigl(u,t_{ji}(u)g\bigr).
	    $$
	    The coordinate changes are homeomorphisms, and the uniqueness of the transition functions gives the cocycle identity $t_{ki}=t_{ji}t_{kj}$ on every triple overlap. Hence the maps $\Phi_i$ define a topology on $\cP$ for which they are $G$-equivariant local trivializations. Thus $\cP\to\Sph^n$ is a principal $G$-bundle. Since the cover is finite and every $t_{ji}$ is Lipschitz on its whole overlap, the local sections $u\mapsto\bigl(u,A_i(u)\bigr)$ form the required finite Lipschitz principal $G$-atlas.
    \end{proof}
	
	Using the finite-moment orbit, Lemma~\ref{prop:local-exact-atlas} turns the pointwise linear equivalences into a Lipschitz principal $G$-bundle of exact maps. To obtain a global Lipschitz family of such maps, we next reduce the structure group and remove the topological obstruction to a global section by passing to a suitable pullback.
	
	\subsection{The globalization of the isometry bundle}\
	
	Let $G^\circ$ denote the connected component of $G$ containing the identity $I_E$. We first reduce the structure group from $G$ to $G^\circ$. Recall that 
	$$
	\cP_u=\{A\in\Hom(E,\R^{n+1}):A(E)=u^\perp,\ A(S)=S_u\},\quad u\in\Sph^n,
	$$
	and $\cP=\bigsqcup_{u\in\Sph^n}\cP_u$.
	
	\begin{lemma}\label{prop:identity-component}
		If $n\geq2$, then the principal $G$-bundle $\cP\to\Sph^n$ contains a Lipschitz principal $G^\circ$-subbundle $\cP^\circ\to\Sph^n$.
	\end{lemma}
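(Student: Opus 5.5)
Since $n\geq2$, $\Sph^n$ is simply connected. A reduction of the principal $G$-bundle $\cP\to\Sph^n$ to $G^\circ$ is the same as a global section of the associated bundle $\cP\times_G(G/G^\circ)=\cP/G^\circ$, and the fiber $G/G^\circ=\pi_0(G)$ is finite because $G$ is a compact Lie group (Lemma~\ref{lem:finite-moments}). This associated bundle is therefore a finite covering of $\Sph^n$, hence trivial. Rather than invoking covering theory abstractly, I will build the reduction by hand from the finite Lipschitz atlas $\{(U_i,A_i)\}$ of Lemma~\ref{prop:local-exact-atlas}, so that the Lipschitz property comes for free.

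\textbf{Step 1.} Reduce the cocycle to $\pi_0(G)$. Each $U_i$ is connected and each $t_{ji}$ is continuous, but the overlaps $U_i\cap U_j$ need not be connected. Composing with the projection $q:G\to\Gamma=G/G^\circ$, the locally constant maps $\bar t_{ji}=q\circ t_{ji}$ form a $\Gamma$-valued \v{C}ech cocycle, that is, a flat principal $\Gamma$-bundle $\cP/G^\circ\to\Sph^n$. This is a covering space with finite discrete fiber. Since $\Sph^n$ is simply connected and locally path connected, every covering of $\Sph^n$ is trivial, so $\cP/G^\circ$ has a continuous global section $\bar s$.

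\textbf{Step 2.} Convert the section into a $G^\circ$-structure. In chart $i$, the section $\bar s$ is a locally constant, hence constant, function $\gamma_i\in\Gamma$ on the connected set $U_i$, and these constants satisfy $\gamma_j=\bar t_{ji}^{-1}\gamma_i$. Here I must fix the left/right convention carefully. Choose $h_i\in G$ with $q(h_i)=\gamma_i$, and set $A_i'(u)=A_i(u)h_i$. Then
$$
t'_{ji}=h_i^{-1}t_{ji}h_j\in G^\circ
$$
on every overlap, because its class in $\Gamma$ is trivial.

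\textbf{Step 3.} Define the subbundle. Set
$$
\cP^\circ_u=\{A_i'(u)g:g\in G^\circ\}\quad\text{for }u\in U_i.
$$
This is independent of $i$ by Step 2 and the normality of $G^\circ$. The charts $(U_i,A_i')$ form a principal $G^\circ$-atlas. The new transition functions are the old ones conjugated and multiplied by fixed elements, so they are Lipschitz on the overlaps, and $\cP^\circ$ is a Lipschitz principal $G^\circ$-subbundle.

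The main obstacle is Step 1. One has to check rigorously that the $\Gamma$-valued cocycle defines a covering space, and that its triviality yields constants $\gamma_i$ compatible on possibly disconnected overlaps. An alternative is to use the nerve of the cover directly. Lemma~\ref{lem:bundle-classification} is not directly applicable here because $G$ may be disconnected, so the covering-space argument is the route I would take.
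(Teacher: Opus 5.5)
Your proposal is correct and follows essentially the same route as the paper. Both arguments use that $\cP/G^\circ\to\Sph^n$ is a finite covering of a simply connected space and hence trivial; the paper selects a connected component $\Sigma$ where you select a global section $\bar s$, which amounts to the same thing. On each connected chart this choice is a constant coset $g_iG^\circ$, so $s_i g_i$ has Lipschitz transitions $g_i^{-1}t_{ji}g_j\in G^\circ$. The obstacle you flag in Step 1 is not real: a principal bundle with discrete structure group is a covering directly from its local trivializations, and the constants $\gamma_i$ are automatically compatible on disconnected overlaps because they all come from the single global section $\bar s$.
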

	
	\begin{proof}
		The identity component $G^\circ$ is a closed normal subgroup of finite index in $G$. Indeed, connected components are closed, and conjugation preserves the component containing the identity, so $G^\circ$ is closed and normal. Since $G$ is a Lie group, $G^\circ$ is also open. Hence $G/G^\circ$ is discrete. It is compact since $G$ is compact, and is therefore finite. Consequently, $\cP/G^\circ\to\Sph^n$ is a principal $G/G^\circ$-bundle and hence a finite covering. Since $\Sph^n$ is simply connected for $n\geq 2$, every connected component of $\cP/G^\circ$ maps homeomorphically onto $\Sph^n$. 
		
		Choose a connected component $\Sigma\subset\cP/G^\circ$, and let $\cP^\circ\subset\cP$ be its preimage under the quotient map $\cP\to\cP/G^\circ$.
		Since $\Sigma$ maps homeomorphically onto $\Sph^n$, $\cP^\circ$ meets each fiber $\cP_u$ in exactly one $G^\circ$-orbit. Thus $\cP^\circ\to\Sph^n$ is a principal $G^\circ$-subbundle of $\cP\to\Sph^n$.
		
		To verify the Lipschitz assertion, take a finite Lipschitz atlas $\{(U_i,s_i)\}$ with connected chart domains, as supplied by Lemma~\ref{prop:local-exact-atlas}. The local section $s_i$ identifies $(\cP|_{U_i})/G^\circ$ with $U_i\times G/G^\circ$. Under this identification, $\Sigma$ is the graph of a continuous map $U_i\to G/G^\circ$. Since $U_i$ is connected and $G/G^\circ$ is discrete, this map is constant. Denote its value by $g_iG^\circ$. Then $s_i^\circ=s_i g_i$ is a local section of $\cP^\circ$. If $s_i t_{ji}=s_j$, then
		$$t_{ji}^\circ=g_i^{-1}t_{ji}g_j\in G^\circ$$
		and $s_i^\circ t_{ji}^\circ=s_j^\circ$. Since $g_i,g_j$ are constant, the reduced transition function is globally Lipschitz.
	\end{proof}
	
	The reduced bundle may still be nontrivial, but when $n\geq3$ is \emph{odd}, its class has finite order. Pulling it back along a suitable positive-degree self-map of $\Sph^n$ therefore yields a trivial bundle.
	
	\begin{lemma}\label{prop:torsion-pullback}
		If $n\geq3$ is odd, then there exists a smooth map $\phi:\Sph^n\to\Sph^n$ of degree $d\geq1$ such that $\phi^*\cP^\circ$ is topologically trivial.
	\end{lemma}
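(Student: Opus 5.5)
The plan is to combine the classification of principal bundles over spheres (Lemma~\ref{lem:bundle-classification}) with the finiteness of even homotopy groups of compact Lie groups (Lemma~\ref{lem:even-homotopy-finite}) and the precomposition rule of Lemma~\ref{lem:degree-precomposition}.

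First, $G^\circ$ is a compact connected Lie group. Compactness comes from Lemma~\ref{lem:finite-moments}: $G$ is closed in $\OO(E)$, and $G^\circ$ is closed in $G$. Choose a classifying map $c:\Sph^n\to BG^\circ$ for $\cP^\circ$ from Lemma~\ref{prop:identity-component}. Since $n\geq3$ and $G^\circ$ is path-connected, Lemma~\ref{lem:bundle-classification} identifies the isomorphism class of $\cP^\circ$ with an element $[c]\in\pi_n(BG^\circ)\cong\pi_{n-1}(G^\circ)$. Here $BG^\circ$ is simply connected, so we may assume $c$ is based without changing its free homotopy class. Because $n$ is odd, $n-1\geq2$ is even, and Lemma~\ref{lem:even-homotopy-finite} shows that $\pi_{n-1}(G^\circ)$ is finite. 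Let $d\geq1$ be its order, or any positive multiple of the order of $[c]$; then $d[c]=0$.

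Next, take a smooth based map $\phi:\Sph^n\to\Sph^n$ of degree $d$. A concrete choice: write $\Sph^n$ as the suspension of $\Sph^{n-1}$, or use a smooth map that wraps a small cap $d$ times. The simplest is to pick $d$ disjoint small balls and map each diffeomorphically, orientation-preservingly, onto the complement of the basepoint via a smooth collapse, sending the rest to the basepoint. Making it smooth requires a smooth bump-type collapse map $\Sph^n\to\Sph^n$ of degree one, which is standard. The degree is $d$ by counting preimages of a regular value with signs. Moving a point of the collapsed region to the basepoint by a rotation makes $\phi$ based.

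Then, by naturality of classifying maps, $c\circ\phi$ classifies $\phi^*\cP^\circ$. Lemma~\ref{lem:degree-precomposition} gives $[c\circ\phi]=d[c]=0$ in $\pi_n(BG^\circ)$. So $c\circ\phi$ is null-homotopic, and by Lemma~\ref{lem:bundle-classification} $\phi^*\cP^\circ$ is isomorphic to the trivial bundle, i.e.\ it admits a global continuous section.

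The only mildly delicate points are checking that $G^\circ$ is a Lie group and compact, which follows from the closed subgroup theorem, and ensuring the based versus free homotopy bookkeeping is correct, which is handled by the simple connectivity of $BG^\circ$. Producing a smooth degree-$d$ map is routine. I expect no substantial obstacle; the main conceptual input is Serre's finiteness theorem, already packaged as Lemma~\ref{lem:even-homotopy-finite}.
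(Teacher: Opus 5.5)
Your proposal is correct and follows the paper's proof essentially step for step: a based classifying map $c$ for $\cP^\circ$, finiteness of $\pi_{n-1}(G^\circ)$ via Lemma~\ref{lem:even-homotopy-finite}, a smooth based self-map $\phi$ of degree $d$ with $d[c]=0$, and then naturality of classifying maps together with Lemma~\ref{lem:degree-precomposition} to get $[c\circ\phi]=0$. The only difference is how $\phi$ is produced: you construct it explicitly by collapsing $d$ disjoint small balls onto the sphere, whereas the paper takes a continuous based representative of degree $d$ and smooths it by relative smooth approximation; either construction is adequate here.
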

	
	\begin{proof}
		Fix basepoints $x_0\in\Sph^n$ and $b_0\in B(G^\circ)$, where $B(G^\circ)$ is the classifying space of $G^\circ$. By Lemma~\ref{lem:bundle-classification}, choose a based classifying map
        $$
            c:(\Sph^n,x_0)\longrightarrow\bigl(B(G^\circ),b_0\bigr)
        $$
        for $\cP^\circ$, and set
        $$
            \alpha=[c]\in\pi_n\bigl(B(G^\circ),b_0\bigr)\cong\pi_{n-1}(G^\circ).
        $$
		Since the group $G^\circ$ is compact and connected and $n-1$ is positive and even,  Lemma~\ref{lem:even-homotopy-finite} shows that $\pi_{n-1}(G^\circ)$ is finite. Choose $d\geq1$ such that $d\alpha=0$.
		
		Choose a continuous based representative of $d\in\pi_n(\Sph^n,x_0)\cong\mathbb Z$. Relative smooth approximation gives a smooth based representative $\phi:\Sph^n\to\Sph^n$, still of degree $d$. By naturality of classifying maps, $c\circ\phi$ classifies $\phi^*\cP^\circ$. Lemma~\ref{lem:degree-precomposition} gives
		$$
		[c\circ\phi]=d[c]=d\alpha=0.
		$$
		The zero class corresponds to the trivial principal $G^\circ$-bundle by Lemma~\ref{lem:bundle-classification}. Hence $\phi^*\cP^\circ$ is topologically trivial.
	\end{proof}
	
	The pullback bundle $\phi^*\cP^\circ$ is now topologically trivial and hence admits a global continuous section. The degree argument in the next section, however, requires such a section to be Lipschitz.
	The following approximation and retraction lemmas provide the required regularity upgrade.
	
	By a Euclidean vector bundle we mean a real vector bundle equipped with a continuous fiberwise inner product. An atlas for such a bundle is called orthonormal if each of its local trivializations is fiberwise isometric.
	\begin{lemma}\label{lem:vector-section-approximation}
		Let $\cE\to Y$ be a Euclidean vector bundle over a compact smooth manifold $Y$. If $\cE$ admits a finite orthonormal atlas with Lipschitz transition functions, then every continuous section can be uniformly approximated by Lipschitz sections.
	\end{lemma}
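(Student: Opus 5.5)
The plan is a partition of unity argument carried out in the given orthonormal Lipschitz atlas. Let $\{(U_i,\Phi_i)\}_{i=1}^m$ be the finite orthonormal atlas, with $\Phi_i:U_i\times\R^r\to\cE|_{U_i}$ fiberwise isometric. Let $t_{ji}:U_i\cap U_j\to\OO(r)$ denote the Lipschitz transition functions, determined by $\Phi_j(y,w)=\Phi_i(y,t_{ji}(y)w)$. First shrink the cover. Choose open sets $V_i\Subset U_i$ that still cover $Y$, and a smooth partition of unity $\{\lambda_i\}$ on $Y$ with $\supp\lambda_i\subset V_i$. Given a continuous section $s$ and $\varepsilon>0$, write $s|_{U_i}=\Phi_i(\cdot,f_i)$ with $f_i:U_i\to\R^r$ continuous. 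On the compact set $\overline{V_i}$, approximate $f_i$ uniformly within $\varepsilon$ by a smooth, hence Lipschitz, map $h_i$ defined on a neighborhood of $\overline{V_i}$. This can be done with Stone--Weierstrass or by mollifying in local coordinates. Then set
$$
\widetilde s=\sum_{i=1}^m\lambda_i\,\Phi_i(\cdot,h_i),
$$
where each summand is extended by zero outside $\supp\lambda_i$. Because the sum is a fiberwise convex combination and each $\Phi_i$ is an isometry, $|\widetilde s(y)-s(y)|\leq\sum_i\lambda_i(y)|h_i(y)-f_i(y)|\leq\varepsilon$. This gives uniform approximation.

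It remains to check that $\widetilde s$ is Lipschitz in the sense used in the paper: each coordinate map in each chart should be Lipschitz. Fix a chart $U_k$. In its coordinates,
$$
\widetilde f_k=\sum_i\lambda_i\,t_{ki}\,h_i
$$
on $U_k$, using the appropriate convention for the transition function $U_i\cap U_k\to\OO(r)$. Each term is supported in $V_i\cap U_k$. On $U_i\cap U_k$ it is a product of bounded Lipschitz factors: $\lambda_i$ is smooth, $t_{ki}$ is Lipschitz on the whole overlap with values in the bounded set $\OO(r)$, and $h_i$ is Lipschitz and bounded on a neighborhood of $\overline{V_i}$. So each term is locally Lipschitz with a uniform constant on $U_i\cap U_k$.

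The main obstacle is making this global on $U_k$. The term vanishes off $\supp\lambda_i$, which is a compact subset of $U_i$, but $U_i\cap U_k$ need not be convex. For this reason I would prove Lipschitz continuity only with respect to the fixed Riemannian distance, using a short-path argument. Take two points $y,y'\in U_k$. If they are both close to each other and both lie in $U_i$, the product bound applies directly. If one of them lies outside $U_i$ while the other lies in $\supp\lambda_i$, then their distance is at least $\operatorname{dist}(\supp\lambda_i,Y\setminus U_i)>0$. In that case the difference is bounded by $2\sup|h_i|$, which is at most a constant times the distance. If the points are far apart, boundedness again gives the estimate. This yields a global Lipschitz bound on $U_k$ for each term, and summing over the finitely many terms finishes the argument.

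If needed, it may be cleaner to work with charts $U_k$ that have compact closure inside larger chart domains on which the transitions are still Lipschitz, as in Step~3 of Lemma~\ref{prop:local-exact-atlas}.
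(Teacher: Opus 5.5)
Your proposal is correct and takes essentially the same route as the paper. Both arguments use a smooth partition of unity subordinate to the orthonormal atlas, smoothly approximate the local coordinate maps on the supports, glue the pieces, bound the error by a fiberwise convex combination using orthonormality, and get Lipschitz control in other charts from the bounded Lipschitz transition functions. Your case analysis for the zero extension, using $\operatorname{dist}(\supp\lambda_i,Y\setminus U_i)>0$, spells out what the paper states in one line; the paper instead uses that $\psi_i g_{i,\varepsilon}$ extended by zero is smooth on $Y$ and that the transitions are orthogonal.
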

	
	\begin{proof}
		Let $\sigma_0$ be a continuous section and let $\varepsilon>0$. Choose a finite orthonormal atlas $\{(U_i,\tau_i)\}$, where $\tau_i:\cE|_{U_i}\to U_i\times V$ and $V$ is a fixed Euclidean vector space. Choose a smooth partition of unity $\{\psi_i\}$ such that $\supp\psi_i\Subset U_i$. Write $\sigma_0$ in the $i$th trivialization as a continuous map $f_i:U_i\to V$. By smooth approximation~\cite[Theorem~6.21]{Lee2013}, choose a smooth map $g_{i,\varepsilon}:U_i\to V$ satisfying
		$$
		|g_{i,\varepsilon}-f_i|<\varepsilon
		\qquad\text{on }\supp\psi_i.
		$$
		
		The local section represented by $\psi_i g_{i,\varepsilon}$ extends by zero to a global Lipschitz section $\eta_{i,\varepsilon}$. Indeed, it vanishes near $Y\setminus U_i$, and in every other chart its coordinates are the product of a Lipschitz orthogonal transition function and the compactly supported smooth map $\psi_i g_{i,\varepsilon}$.
		
		Set $\eta_\varepsilon=\sum_i\eta_{i,\varepsilon}$. Since the sum is finite, $\eta_\varepsilon$ is Lipschitz. Since the trivializations are orthonormal and $\sum_i\psi_i=1$, we have
		$$
		|\eta_\varepsilon(y)-\sigma_0(y)|\leq\sum_{i:\,y\in U_i}\psi_i(y)|g_{i,\varepsilon}(y)-f_i(y)|<\varepsilon
		$$
		for every $y\in Y$. Hence $\eta_\varepsilon\to\sigma_0$ uniformly as $\varepsilon\to0$.
	\end{proof}

	\begin{lemma}\label{lem:equivariant-retraction}
		Let $V$ be a finite-dimensional Euclidean space and let $G\subset\OO(V)$ be a compact subgroup. There exist an open neighborhood $\mathcal U\subset\End(V)$ of $G$, invariant under left multiplication by $G$, and a Lipschitz retraction $q:\mathcal U\to G$ such that
		$$
		q(gT)=gq(T)
		$$
		for every $g\in G$ and $T\in\mathcal U$.
	\end{lemma}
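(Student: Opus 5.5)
The plan is to build the retraction from the polar decomposition, then correct it by a local nearest-point projection onto $G$ inside $\OO(V)$.

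\textbf{Step 1: retraction onto $\OO(V)$.} On the open set $\GL(V)\subset\End(V)$, set
$$
\pi(T)=T(T^*T)^{-1/2},
$$
the orthogonal polar factor. This map is smooth, because $T\mapsto(T^*T)^{-1/2}$ is smooth on the cone of positive-definite operators. It restricts to the identity on $\OO(V)$, and it satisfies $\pi(gT)=g\pi(T)$ for every $g\in\OO(V)$, since $(gT)^*(gT)=T^*T$.

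\textbf{Step 2: equivariant tubular retraction $\OO(V)\to G$.} The group $G$ is a compact embedded submanifold of $\OO(V)$, being a closed subgroup. Use the bi-invariant metric on $\OO(V)$, which is the restriction of the Frobenius metric, or equivalently the Euclidean structure of $\End(V)$ restricted to $\OO(V)$. By the tubular neighborhood theorem there is $\varepsilon>0$ such that each point of the open tube $N_\varepsilon=\{Q\in\OO(V):\operatorname{dist}(Q,G)<\varepsilon\}$ has a unique nearest point $r(Q)\in G$. The map $r$ is smooth on $N_\varepsilon$; I would take $r$ to be the normal exponential retraction. Left multiplication by $g\in G$ is an isometry of $\OO(V)$ preserving $G$, so uniqueness of the nearest point gives $r(gQ)=gr(Q)$, and $N_\varepsilon$ is left $G$-invariant.

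\textbf{Step 3: assemble $\mathcal U$ and $q$.} Set $\mathcal U'=\pi^{-1}(N_\varepsilon)\subset\GL(V)$ and $q'=r\circ\pi$. Both pieces are equivariant under left multiplication by $G$, so $q'$ is as well, and $q'$ is the identity on $G$. To make the Lipschitz constant global rather than local, shrink to a uniform tube
$$
\mathcal U=\{T:\operatorname{dist}(T,G)<\delta\},
$$
with the distance taken in $\End(V)$. This set is left $G$-invariant because $G$ acts by Frobenius isometries. By compactness of $G$, for small $\delta$ its closure lies in $\mathcal U'$ and is compact. Since $q'$ is smooth on a neighborhood of this compact closure, it is Lipschitz on $\mathcal U$ provided $\mathcal U$ is convex at small scales.

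\textbf{Main obstacle.} The main subtlety is that $\mathcal U$ is not convex, so local Lipschitz bounds do not automatically give a global one. I would handle it by splitting pairs of points by distance. For $|T-T'|<\delta$, both points lie in the $2\delta$-tube, and the segment between them lies in the $3\delta$-tube. Taking $\delta$ so that the $3\delta$-tube is still compactly inside $\mathcal U'$, integrating $dq'$ along the segment gives $|q'(T)-q'(T')|\le C|T-T'|$. For $|T-T'|\ge\delta$, boundedness suffices: $q'$ takes values in $G\subset\OO(V)$, so $|q'(T)-q'(T')|\le 2\sqrt{\dim V}\le (2\sqrt{\dim V}/\delta)\,|T-T'|$. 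Together these give a global Lipschitz bound on $\mathcal U$, and $q=q'|_{\mathcal U}$ is the required retraction.
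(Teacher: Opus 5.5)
Your proof is correct. Its Lipschitz-globalization step is exactly the paper's: pairs closer than $\delta$ are handled by integrating $dq'$ along segments that stay in a slightly larger compact tube, and far pairs are handled by boundedness of $G$.

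Where you differ is in how the smooth equivariant retraction is produced.
\begin{itemize}
\item The paper takes the Euclidean tubular neighborhood of $G$ directly in $\End(V)$, through $\Xi(h,\nu)=h+\nu$ with $\nu\in(T_hG)^\perp\subset\End(V)$. It gets equivariance because left multiplication by $g\in G$ is a linear Frobenius isometry preserving $G$. Such a map carries normal spaces to normal spaces, so $\Xi(gh,g\nu)=g\,\Xi(h,\nu)$.
\item You instead factor through the polar retraction $T\mapsto T(T^*T)^{-1/2}$ onto $\OO(V)$. You then apply a Riemannian tubular (nearest-point) retraction of $G$ inside $\OO(V)$.
\end{itemize}

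Your first stage is explicit and global on $\GL(V)$. Your second stage, however, needs two facts: the Riemannian tubular neighborhood theorem, and the identification of the footpoint map with the unique nearest-point map. The paper needs only the Euclidean theorem and reads off equivariance directly from $\Xi$, with no uniqueness-of-minimizers argument. In fact the polar step is dispensable: running your Step 2 with the chordal Frobenius distance directly in $\End(V)$ is essentially the paper's construction.
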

	
	\begin{proof}
		Since $G$ is compact, it is closed in $\OO(V)$. The closed subgroup theorem~\cite[Theorem~20.12]{Lee2013} therefore shows that $G$ is an embedded Lie subgroup of $\OO(V)$, and hence an embedded submanifold of $\End(V)$. For every $g\in G$, left multiplication $T\mapsto gT$ is a linear isometry of $\End(V)$ carrying $G$ onto itself. Hence, for every $h\in G$, it maps the tangent space $T_hG$ onto $T_{gh}G$ and therefore maps the normal space $(T_hG)^\perp$ onto $(T_{gh}G)^\perp$.
		
		Since $G$ is compact, the tubular neighborhood theorem~\cite[Theorem~6.24]{Lee2013} gives $\varepsilon>0$ so that
		$$
		\Xi:\{(h,\nu):h\in G,\ \nu\in(T_hG)^\perp,\ \|\nu\|<\varepsilon\}\longrightarrow\mathcal U_0,
		\qquad
		\Xi(h,\nu)=h+\nu,
		$$
		is a diffeomorphism onto an open neighborhood $\mathcal U_0$ of $G$. The domain of $\Xi$ is invariant under $(h,\nu)\mapsto(gh,g\nu)$, and
		$$
		\Xi(gh,g\nu)=g\Xi(h,\nu).
		$$
		Thus $\mathcal U_0$ is invariant under left multiplication by $G$, and the map $q_0:\mathcal U_0\to G$ defined by $q_0(\Xi(h,\nu))=h$ is a smooth retraction satisfying $q_0(gT)=gq_0(T)$.
		
		It remains to obtain a Lipschitz retraction on a smaller neighborhood. Choose $0<\varepsilon'<\varepsilon$ and let $\mathcal U$ be the image under $\Xi$ of the tube $\|\nu\|<\varepsilon'$. Then $\mathcal U$ is invariant under left multiplication by $G$, and
		$\overline{\mathcal U}$ is compactly contained in $\mathcal U_0$. Set $q=q_0|_{\mathcal U}$. Choose $\delta>0$ such that the closed $\delta$-neighborhood of $\overline{\mathcal U}$ lies in
		$\mathcal U_0$. Since this closed neighborhood is compact, the derivative of $q_0$ is bounded there by some constant $\Lambda$.
		
		If $T,T'\in\mathcal U$ and $\|T-T'\|<\delta$, the segment joining them lies in the closed $\delta$-neighborhood of $\overline{\mathcal U}$. Integrating the derivative of $q_0$ along this segment gives
		$$
		\|q(T)-q(T')\|\leq\Lambda\|T-T'\|.
		$$
		If $\|T-T'\|\geq\delta$, then $q(T),q(T')\in G$, and hence
		$$
		\|q(T)-q(T')\|\leq\operatorname{diam}(G)\leq\frac{\operatorname{diam}(G)}{\delta}\|T-T'\|.
		$$
		Thus $q$ is Lipschitz on $\mathcal U$, with Lipschitz constant at most $\max\{\Lambda,\operatorname{diam}(G)/\delta\}$.
	\end{proof}
	
	The preceding two lemmas combine to give the Lipschitz regularization needed for the topologically trivial pullback.
	
	\begin{theorem}\label{thm:lipschitz-section}
		Let $V$ be a finite-dimensional Euclidean space, and let $P\to Y$ be a principal $G$-bundle with a Lipschitz atlas, where $Y$ is a compact smooth manifold and $G\subset\OO(V)$ is a compact subgroup. If $P$
		is topologically trivial, then $P$ admits a global Lipschitz section.
	\end{theorem}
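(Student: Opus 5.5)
The plan is to embed $P$ in an associated Euclidean vector bundle, approximate a continuous section there by a Lipschitz one using Lemma~\ref{lem:vector-section-approximation}, and retract it back onto $P$ with Lemma~\ref{lem:equivariant-retraction}. Since $G\subset\OO(V)$ acts on $\End(V)$ by left multiplication through linear isometries, we can form $\cE=P\times_G\End(V)$. Given the Lipschitz atlas $\{(U_i,s_i)\}$ of $P$ with transition functions $t_{ji}$, the bundle $\cE$ has local trivializations $[s_i(y),T]\mapsto(y,T)$. The chart changes are $T\mapsto t_{ji}(y)T$ or its inverse, depending on the convention. These are left multiplications by orthogonal maps, so they are isometries of $\End(V)$ for the Frobenius inner product, and they depend Lipschitz on $y$. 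The fiberwise inner product is well defined by this isometry property. Hence $\cE$ is a Euclidean vector bundle with a finite orthonormal atlas and Lipschitz transition functions, as Lemma~\ref{lem:vector-section-approximation} requires. The bundle $P$ sits inside $\cE$ as the subbundle $P\times_G G$: the point $p\in P$ corresponds to $[p,I_V]$, and in chart $i$ it corresponds to the element $g\in G\subset\End(V)$ with $p=s_i(y)g$.

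Since $P$ is topologically trivial, it has a global continuous section $s$. Its image in $\cE$ is a continuous section $\sigma_0$, whose local coordinates are the continuous maps $g_i:U_i\to G$ with $s=s_ig_i$. Choose $\varepsilon>0$ such that the $\varepsilon$-neighbourhood of $G$ in $\End(V)$ is contained in the neighbourhood $\mathcal U$ of Lemma~\ref{lem:equivariant-retraction}. Because $G$ is compact, such an $\varepsilon$ exists: $\mathcal U$ contains a uniform tube around $G$. Lemma~\ref{lem:vector-section-approximation} then gives a Lipschitz section $\eta$ with $|\eta-\sigma_0|<\varepsilon$ uniformly. Since the atlas is orthonormal, each local coordinate $f_i$ of $\eta$ satisfies $\|f_i-g_i\|<\varepsilon$, so $f_i(y)\in\mathcal U$.

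Now set $h_i=q\circ f_i:U_i\to G$. Each $h_i$ is Lipschitz, being a composition of Lipschitz maps. The key point is compatibility on overlaps. Since $f_j=t_{ji}^{\pm1}f_i$, with transition factors in $G$, the $G$-equivariance $q(gT)=gq(T)$ gives $h_j=t_{ji}^{\pm1}h_i$, which is exactly the transformation law for the coordinates of a section of $P$. The local sections $s_ih_i$ therefore agree on overlaps and define a global section of $P$ whose coordinate maps $h_i$ are Lipschitz. This is a global Lipschitz section in the sense of Definition~\ref{def:lipschitz-principal-bundle}.

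The main obstacle is bookkeeping rather than analysis. The left/right conventions must be checked so that the transition in the coordinates of $\cE$ really acts by \emph{left} multiplication: with $s_it_{ji}=s_j$ and $[s_ig,\cdot]$, the coordinates transform as $T_i=t_{ji}T_j$. This matches the left equivariance in Lemma~\ref{lem:equivariant-retraction}. One also needs a single uniform $\varepsilon$, which comes from compactness of $G$ and the finiteness of the atlas.
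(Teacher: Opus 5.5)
Your proposal is correct and is essentially the paper's proof: embed $P$ as $P\times_G G$ inside $\cE=P\times_G\End(V)$, approximate the image of a continuous section by a Lipschitz section via Lemma~\ref{lem:vector-section-approximation}, and push the approximation back with the left-equivariant retraction of Lemma~\ref{lem:equivariant-retraction}. The only cosmetic difference is that you apply $q$ chart by chart and verify the transformation law $h_i=t_{ji}h_j$ directly, whereas the paper packages the same equivariance check as the well-definedness of the global map $Q([p,T])=[p,q(T)]$ on $P\times_G\mathcal U$.
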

	
	\begin{proof}
		We divide the proof into two steps.
		
		\textbf{Step 1.} Approximate a continuous section inside an associated vector bundle.
		
		Form the associated vector bundle $\cE\to Y$ given by
		$$
		\cE=P\times_G\End(V),
		\qquad
		(pg,T)\sim(p,gT).
		$$
		Its transition functions act on $\End(V)$ by left multiplication. They are Lipschitz and orthogonal because the principal atlas is Lipschitz and $G\subset\OO(V)$. Define $\iota:P\longrightarrow\cE$ by $\iota(p)=[p,I_V]$.
		Since $\iota(pg)=[p,g]$, the map $\iota$ identifies $P$ with the orbit subbundle $P\times_G G\subset\cE$.
		
		Since $P$ is topologically trivial, it admits a continuous section $s_0:Y\to P$. Let $\mathcal U\subset\End(V)$ and $q:\mathcal U\to G$ be as in Lemma~\ref{lem:equivariant-retraction}. Since $G$ is compact and
		$\mathcal U$ is open, choose $\delta>0$ so that the $\delta$-neighborhood of $G$ lies in $\mathcal U$. By Lemma~\ref{lem:vector-section-approximation}, there is a Lipschitz
		section $\sigma$ of $\cE$ such that
		$$
		\|\sigma-\iota\circ s_0\|_\infty<\delta.
		$$
		Since $\iota\circ s_0$ takes values in $P\times_GG$, the estimate and the choice of $\delta$ show that $\sigma$ takes values in $P\times_G\mathcal U$.
		
		\textbf{Step 2.} Retract the approximation onto the orbit subbundle.
		
		Define
		$$
		Q:P\times_G\mathcal U\longrightarrow P\times_G G,
		\qquad
		Q([p,T])=[p,q(T)].
		$$
		This map is well defined. Indeed, for $g\in G$ and $T\in\mathcal U$,
		$$
		Q([pg,T])=[pg,q(T)]=[p,gq(T)]=[p,q(gT)]=Q([p,gT]),
		$$
		where the second equality uses the equivalence relation defining the associated bundle. In the local trivializations induced by the principal atlas, $Q$ has the form $(y,T)\mapsto(y,q(T))$, so it is a Lipschitz retraction onto $P\times_GG$.
		
		The inverse of $\iota:P\to P\times_G G$ is given by $\iota^{-1}([p,g])=pg$. In the local trivializations induced by the principal atlas, this map is represented by the identity and hence is Lipschitz. Therefore
		$$
		s=\iota^{-1}\circ Q\circ\sigma
		$$
		is a global Lipschitz section of $P\to Y$.
	\end{proof}
	
	For the map $\phi$ given by Lemma~\ref{prop:torsion-pullback}, the pullback bundle $\phi^*\cP^\circ$ is topologically trivial, and the pullback of the reduced Lipschitz atlas is again Lipschitz. Theorem~\ref{thm:lipschitz-section} therefore gives a global Lipschitz section of $\phi^*\cP^\circ$, hence a global Lipschitz family of exact section isometries.
	
	\begin{theorem}\label{cor:global-exact-family}
		If $n\geq3$ is odd, then there exist a smooth map $\phi:\Sph^n\to\Sph^n$ of degree $d\geq1$ and a Lipschitz map $z\mapsto A_z\in\Hom(E,\R^{n+1})$ such that
		\begin{equation}\label{eq:global-exact-family}
			A_z(E)=\phi(z)^\perp,
			\qquad
			A_z(S)=S_{\phi(z)}=K\cap\phi(z)^\perp
		\end{equation}
		for each $z\in\Sph^n$.
	\end{theorem}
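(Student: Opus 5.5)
The plan is to assemble the results of this section. First, Lemma~\ref{prop:local-exact-atlas} makes $\cP\to\Sph^n$ a principal $G$-bundle with a finite Lipschitz atlas whose local sections are the exact maps $u\mapsto A_i(u)$. Lemma~\ref{prop:identity-component} (valid since $n\geq3$) then gives a Lipschitz reduction $\cP^\circ$ with structure group $G^\circ$. Its local sections are $s_i^\circ=A_i g_i$ with $g_i\in G$ constant, and its transition functions $t^\circ_{ji}$ are globally Lipschitz on the overlaps. Because $n$ is odd, Lemma~\ref{prop:torsion-pullback} supplies a smooth map $\phi:\Sph^n\to\Sph^n$ of degree $d\geq1$ for which $\phi^*\cP^\circ$ is topologically trivial.

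Next I would check that $\phi^*\cP^\circ$ carries a Lipschitz atlas in the sense of Definition~\ref{def:lipschitz-principal-bundle}. I would take the open cover $\{\phi^{-1}(U_i)\}$ with local sections $z\mapsto\bigl(z,s_i^\circ(\phi(z))\bigr)$. The transition functions are $t^\circ_{ji}\circ\phi$. Since $\phi$ is smooth on the compact manifold $\Sph^n$, it is Lipschitz for the Riemannian distance, so these compositions are Lipschitz on the whole overlaps $\phi^{-1}(U_i\cap U_j)$. The structure group $G^\circ$ is a compact subgroup of $\OO(E)$: it is closed in $G$, which is compact by Lemma~\ref{lem:finite-moments}. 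Theorem~\ref{thm:lipschitz-section} therefore applies with $V=E$ and $Y=\Sph^n$, and yields a global Lipschitz section $s$ of $\phi^*\cP^\circ$.

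Finally I would read off $A_z$. Write $s(z)=\bigl(z,A_z\bigr)$ with $A_z\in\cP^\circ_{\phi(z)}\subset\cP_{\phi(z)}$. By the definition of $\cP_u$, this gives $A_z(E)=\phi(z)^\perp$ and $A_z(S)=K\cap\phi(z)^\perp$, which is \eqref{eq:global-exact-family}.

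For Lipschitz continuity of $z\mapsto A_z$ into $\Hom(E,\R^{n+1})$, I would work on each $\phi^{-1}(U_i)$. There $A_z=A_i(\phi(z))\,g_i\,h_i(z)$, where $h_i$ is the Lipschitz coordinate map of $s$. Each factor is bounded and Lipschitz: $A_i$ is Lipschitz and bounded on $U_i$ by construction, $\phi$ is Lipschitz, and $h_i$ takes values in the compact group $G^\circ$. Hence $z\mapsto A_z$ is locally Lipschitz on $\Sph^n$.

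The main point of care is this last passage from local to global Lipschitz. I would handle it with a Lebesgue number argument. Shrink the cover so that each $\overline{U_i}$ lies inside a larger chart domain, so the Lipschitz bounds hold on a slightly larger set. Then take $\lambda>0$ to be a Lebesgue number of the cover $\{\phi^{-1}(U_i)\}$. Points at distance less than $\lambda$ lie in a common chart, where the local bound applies. Points at distance at least $\lambda$ satisfy $\|A_z-A_{z'}\|\leq 2\sup_z\|A_z\|\leq\bigl(2\sup_z\|A_z\|/\lambda\bigr)\,|z-z'|$, and $\sup_z\|A_z\|$ is finite by compactness. Together these give a global Lipschitz constant.
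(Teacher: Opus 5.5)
Your proposal is correct and follows the paper's proof essentially step for step. You pull back the reduced Lipschitz atlas of $\cP^\circ$ along the smooth degree-$d$ map $\phi$ from Lemma~\ref{prop:torsion-pullback}, apply Theorem~\ref{thm:lipschitz-section} to the topologically trivial bundle $\phi^*\cP^\circ$, read off $A_z=s_i^\circ(\phi(z))h_i(z)$, and use a Lebesgue-number argument to pass from local to global Lipschitz continuity. Two small additions: your remark that $G^\circ$ is a compact subgroup of $\OO(E)$ makes explicit a hypothesis of Theorem~\ref{thm:lipschitz-section} that the paper leaves implicit, and your extra shrinking of the cover is harmless but unnecessary, since the Lipschitz bounds already hold on each whole $\phi^{-1}(U_i)$.
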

	
	\begin{proof}
		Choose $d$ and $\phi$ as in Lemma~\ref{prop:torsion-pullback}. Let $\{(U_i,s_i^\circ)\}$ be the reduced Lipschitz atlas from Lemma~\ref{prop:identity-component}, and put $W_i=\phi^{-1}(U_i)$. The pullback atlas has local sections $z\mapsto(z,s_i^\circ(\phi(z)))$ and transition functions $t_{ji}^\circ\circ\phi$. These transition functions are Lipschitz since $t_{ji}^\circ$ is Lipschitz and $\phi$ is smooth on the compact sphere. By Lemma~\ref{prop:torsion-pullback}, the pullback bundle $\phi^*\cP^\circ$ is topologically trivial. Theorem~\ref{thm:lipschitz-section} therefore gives it a global Lipschitz section $\sigma$.
		
		Write $\sigma(z)=(z,A_z)$. On $W_i$, let $h_i:W_i\to G^\circ$ be the coordinate map of $\sigma$. Then
		$$A_z=s_i^\circ(\phi(z))h_i(z).$$
		Both factors on the right are bounded and Lipschitz, so $z\mapsto A_z$ is Lipschitz on every $W_i$. Since $\{W_i\}$ is a finite open cover of the compact sphere, a standard Lebesgue-number argument shows that $z\mapsto A_z$ is globally Lipschitz. Finally, $\sigma(z)\in(\phi^*\cP^\circ)_z$ gives $A_z\in\cP^\circ_{\phi(z)}\subset\cP_{\phi(z)}$. The definition of $\cP_{\phi(z)}$ now gives \eqref{eq:global-exact-family}.
	\end{proof}
	
	The preceding theorem is the only result of the gluing stage that will be used below.
	
	\vskip 25pt
	\section{\bf Proofs of the main results}
	\label{sec:degree-conclusion}
	\vskip 5pt
	
	We first prove Theorem~\ref{thm:odd-hyperplane}. Let $n\geq3$ be odd, and let $K\subset\R^{n+1}$ be an origin-symmetric convex body whose central hyperplane sections are pairwise linearly equivalent. Retain the notation of the preceding section: $S\subset E$ is the fixed model section, linearly equivalent to every section
	$S_u=K\cap u^\perp$. Choose
	$\phi$, $d$, and the Lipschitz family $z\mapsto A_z$ as in
	Theorem~\ref{cor:global-exact-family}.
	
	We use this family in a degree calculation. By \eqref{eq:section-gauge} and \eqref{eq:global-exact-family}, we have
	\begin{equation}\label{eq:Minkowski functional-exactness}
		p_K(A_z y)=p_{K\cap\phi(z)^\perp}(A_z y)=p_{A_z(S)}(A_z y)=p_S(y)
	\end{equation}
	for all $z\in\Sph^n$ and $y\in E$.

	For each $y\in E\setminus\{0\}$, \eqref{eq:Minkowski functional-exactness} gives $p_K(A_z y)=p_S(y)>0$, so the map
	$$
	w_y:\Sph^n\longrightarrow\Sph^n,
	\qquad
	w_y(z)=\frac{A_z y}{|A_z y|}
	$$
	is well defined. Since $A_z y\in\phi(z)^\perp$, one has
	\begin{equation}\label{eq:orthogonal-orbits}
		\ip{w_y(z)}{\phi(z)}=0.
	\end{equation}
	
	\begin{lemma}\label{prop:orbit-degree}
		For each $y\in E\setminus\{0\}$, $\degree w_y=d$.
	\end{lemma}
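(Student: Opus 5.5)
The plan is to connect $w_y$ to $\phi$ by an explicit homotopy through maps $\Sph^n\to\Sph^n$, and then use homotopy invariance of sphere degree together with $\degree\phi=d$ from Theorem~\ref{cor:global-exact-family}.

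First, check that $w_y$ is a continuous map $\Sph^n\to\Sph^n$. By Theorem~\ref{cor:global-exact-family}, $z\mapsto A_z$ is Lipschitz, so $z\mapsto A_zy$ is continuous. By \eqref{eq:Minkowski functional-exactness}, $p_K(A_zy)=p_S(y)>0$ for $y\neq0$. Hence $A_zy\neq0$ for every $z$, and the normalization $A_zy/|A_zy|$ is continuous. Thus $\degree w_y$ is defined.

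Next, define for $t\in[0,1]$
\[
H(z,t)=\frac{(1-t)\,w_y(z)+t\,\phi(z)}{\bigl|(1-t)\,w_y(z)+t\,\phi(z)\bigr|}.
\]
The only point to verify is that the denominator never vanishes. By \eqref{eq:orthogonal-orbits}, $w_y(z)$ and $\phi(z)$ are orthogonal unit vectors, so Pythagoras gives
\[
\bigl|(1-t)w_y(z)+t\phi(z)\bigr|^2=(1-t)^2+t^2\geq\tfrac12.
\]
Hence $H$ is a continuous homotopy $\Sph^n\times[0,1]\to\Sph^n$ from $w_y$ to $\phi$. Homotopy invariance of sphere degree (Definition~\ref{def:degree-conventions} and the remarks after it) then gives $\degree w_y=\degree\phi=d$.

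There is no real obstacle here. The orthogonality \eqref{eq:orthogonal-orbits}, which comes from $A_z(E)=\phi(z)^\perp$, rules out $w_y(z)=-\phi(z)$, and that is exactly what makes the straight-line homotopy legitimate. The only care needed is the nonvanishing of $A_zy$, which is supplied by the exactness identity \eqref{eq:Minkowski functional-exactness}.
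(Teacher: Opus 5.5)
Your proposal is correct and takes essentially the same approach as the paper: both use the orthogonality \eqref{eq:orthogonal-orbits} to connect $w_y$ and $\phi$ by a homotopy along the quarter great circle between them, then apply homotopy invariance to get $\degree w_y=\degree\phi=d$. The paper writes the arc as $\cos(\pi t/2)\phi+\sin(\pi t/2)w_y$, which has unit length automatically, whereas your normalized straight-line homotopy traces the same arc with a different parametrization; your separate check that $A_zy\neq0$ is one the paper makes just before the lemma.
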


	\begin{proof}
		By \eqref{eq:orthogonal-orbits}, $\phi(z)$ and $w_y(z)$ are
		orthogonal unit vectors for every $z\in\Sph^n$. Hence
		$$
		H(z,t)=\cos\Bigl(\frac{\pi t}{2}\Bigr)\phi(z)+\sin\Bigl(\frac{\pi t}{2}\Bigr)w_y(z)
		$$
		defines a homotopy
		$H:\Sph^n\times[0,1]\to\Sph^n$ from $\phi$ to $w_y$.
		Therefore, by homotopy invariance,
		$\degree w_y=\degree\phi=d$.
	\end{proof}

	To pass from the degree of $w_y$ to Brouwer degree, let $A:\Sph^n\to\Hom(E,\R^{n+1})$ denote the map $z\mapsto A_z$, and extend it by positive homogeneity to
	$$
	\cA:\R^{n+1}\longrightarrow\Hom(E,\R^{n+1}),
	\qquad\cA(0)=0,\qquad\cA(tz)=tA_z
	$$
	for $z\in\Sph^n$ and $t>0$. For $y\in E$, define
	$$
	F_y:\R^{n+1}\longrightarrow\R^{n+1},\qquad F_y(x)=\cA(x)y.
	$$
	
	For $y\in E\setminus\{0\}$, \eqref{eq:Minkowski functional-exactness} gives $F_y(z)=A_z y\in p_S(y)\partial K$ for $z\in\Sph^n$.
	By Lemma~\ref{prop:orbit-degree}, $\degree w_y=d\geq 1$, and hence $w_y$ is surjective. Since $w_y(z)=F_y(z)/|F_y(z)|$ and $F_y$ is positively homogeneous, it follows that
	\begin{equation}\label{eq:cone-image}
		F_y(\Sph^n)=p_S(y)\partial K,
        \qquad
        F_y(\Ball^{n+1})=p_S(y)K.
	\end{equation}

	\begin{lemma}\label{lem:cone-lipschitz}
		The map $\cA$ is Lipschitz. Consequently, for each $y\in E$, $F_y$ is Lipschitz with $\Lip(F_y)\leq\Lip(\cA)|y|$.
	\end{lemma}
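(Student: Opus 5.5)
The plan is to prove that $\cA$ is Lipschitz directly, using two facts: $z\mapsto A_z$ is Lipschitz on $\Sph^n$ by Theorem~\ref{cor:global-exact-family}, and it is bounded there, say $\|A_z\|\leq M$ with $M=\max_{\Sph^n}\|A_z\|<\infty$ by continuity and compactness. Write $L=\Lip(A)$ with respect to the Riemannian (geodesic) distance on $\Sph^n$. Since geodesic distance is at most $\pi/2$ times chordal distance, $A$ is also Lipschitz with constant $L'=\pi L/2$ for the Euclidean distance $|z-z'|$. We may therefore work entirely with Euclidean distances.

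Take $x,x'\in\R^{n+1}$. If one of them vanishes, say $x'=0$, then
\[
\|\cA(x)-\cA(0)\|=|x|\,\|A_{x/|x|}\|\leq M|x|.
\]
If both are nonzero, write $x=tz$ and $x'=t'z'$ with $t,t'>0$ and $z,z'\in\Sph^n$. We may assume $t\geq t'$. Split
\[
\cA(x)-\cA(x')=(t-t')A_z+t'(A_z-A_{z'}).
\]
The first term has norm at most $M|t-t'|\leq M|x-x'|$. For the second term, use the elementary inequality $t'|z-z'|\leq|tz-t'z'|$, valid when $t\geq t'$. This bounds the second term by $L't'|z-z'|\leq L'|x-x'|$.

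To justify the inequality, expand the square:
\[
|tz-t'z'|^2=(t-t')^2+tt'|z-z'|^2\geq t'^2|z-z'|^2.
\]
Hence $\Lip(\cA)\leq M+L'$.

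For the consequence, $F_y(x)-F_y(x')=(\cA(x)-\cA(x'))y$. The operator norm is dominated by the Frobenius norm, so $|F_y(x)-F_y(x')|\leq\|\cA(x)-\cA(x')\|\,|y|\leq\Lip(\cA)|y|\,|x-x'|$.

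The only delicate point is the comparison between the angular and radial parts, which is handled by the identity for $|tz-t'z'|^2$ above. Everything else is routine.
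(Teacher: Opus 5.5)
Your proof is correct and follows essentially the same route as the paper: you bound $\sup_z\|A_z\|$, treat the origin separately, and split $\cA(x)-\cA(x')$ into a radial term plus an angular term weighted by the smaller radius. The only difference is that you get $t'|z-z'|\le|x-x'|$ from the exact identity $|tz-t'z'|^2=(t-t')^2+tt'|z-z'|^2$, while the paper uses the triangle inequality to get $\alpha|z-w|\le 2|x-x'|$, so your constant $M+L'$ improves the paper's $\Lambda_0+2\Lambda_1$. Your factor $\pi/2$ assumes the round metric, but any fixed Riemannian distance on $\Sph^n$ is bi-Lipschitz to the chordal one, so this does not affect the argument.
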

	
	\begin{proof}
		Put $\Lambda_0=\sup_{z\in\Sph^n}\|A_z\|$. Since the fixed Riemannian distance on $\Sph^n$ is bi-Lipschitz equivalent to the Euclidean chordal distance, there exists $\Lambda_1>0$ such that $\|A_z-A_w\|\leq\Lambda_1|z-w|$ for all $z,w\in\Sph^n$. Let $x,x'\in\R^{n+1}$. If $x=0$ or $x'=0$,
		then $\|\cA(x)-\cA(x')\|\leq\Lambda_0|x-x'|$. Otherwise, after
		interchanging $x$ and $x'$ if necessary, write
		$x=\alpha z$ and $x'=\beta w$, where $z,w\in\Sph^n$ and
		$0<\alpha\leq\beta$. Then we have $|\alpha-\beta|\leq|x-x'|$ and 
		$$
		\alpha|z-w|\leq|\alpha z-\beta w|+|\beta w-\alpha w|=|x-x'|+|\beta-\alpha|
		\leq2|x-x'|.
		$$
		Hence
		$$
		\begin{aligned}
			\|\cA(x)-\cA(x')\|
			&=\|\alpha A_z-\beta A_w\|\leq\alpha\|A_z-A_w\|
			+|\alpha-\beta|\|A_w\|\\
			&\leq\alpha\Lambda_1|z-w|
			+\Lambda_0|\alpha-\beta|
			\leq(2\Lambda_1+\Lambda_0)|x-x'|.
		\end{aligned}
		$$
		Thus $\cA$ is Lipschitz. Finally, for each $y\in E$,
		$$
		|F_y(x)-F_y(x')|=|(\cA(x)-\cA(x'))y|\leq\|\cA(x)-\cA(x')\||y|
		\leq\Lip(\cA)|y|\,|x-x'|.
		$$
		Hence $\Lip(F_y)\leq\Lip(\cA)|y|$.
	\end{proof}
	
	\begin{lemma}\label{prop:cone-degree}
    For every $y\in E\setminus\{0\}$,
    $$
        \degree(F_y,\Int\Ball^{n+1},v)=d,
        \qquad v\in\Int\bigl(p_S(y)K\bigr),
    $$
    while the degree is zero for $v\notin p_S(y)K$.
    \end{lemma}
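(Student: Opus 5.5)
The plan is to reduce the Brouwer degree of $F_y$ on the open ball to the sphere degree $\degree w_y=d$ from Lemma~\ref{prop:orbit-degree}. We do this in two moves: first a homotopy through the radial structure of $F_y$, and then Lemma~\ref{lem:radial-cone-degree}. Fix $y\neq0$ and put $r=p_S(y)>0$. By \eqref{eq:Minkowski functional-exactness} and positive homogeneity, $p_K(F_y(x))=r|x|$ for every $x\in\R^{n+1}$. In particular $F_y(\partial\Ball^{n+1})=r\partial K$. Hence every $v\notin r\partial K$ lies outside $F_y(\partial\Ball^{n+1})$, and $\degree(F_y,\Int\Ball^{n+1},v)$ is defined there.

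For the vanishing statement, note that $v\notin rK$ is not in $F_y(\Ball^{n+1})=rK$ by \eqref{eq:cone-image}. Vanishing outside the image then gives degree zero. Alternatively, the degree is constant on the unbounded component of $\R^{n+1}\setminus r\partial K$, and it vanishes for $|v|$ large.

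For $v\in\Int(rK)$, the degree is constant on the connected set $\Int(rK)$, which is convex, so it suffices to take $v=0$. I would connect $F_y$ to the radial cone $x\mapsto |x|\,w_y(x/|x|)$ by the straight-line rescaling
\[
H_t(x)=\bigl((1-t)+t\,r^{-1}p_K(F_y(x/|x|))^{-1}\cdots\bigr)
\]
or, more cleanly, by normalizing: define
\[
G_t(x)=\frac{F_y(x)}{(1-t)+t\,|F_y(x)|/|x|}\quad(x\neq0),\qquad G_t(0)=0.
\]
Here $|F_y(x)|/|x|=|A_{x/|x|}y|$ is bounded above and below by positive constants, because $p_K(A_zy)=r$. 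Thus $G_t$ is continuous on $\Ball^{n+1}$. It never vanishes on $\partial\Ball^{n+1}$, since $G_t(x)$ is a positive multiple of $F_y(x)\neq0$. Homotopy invariance at $v=0$ therefore gives $\degree(F_y,\Int\Ball^{n+1},0)=\degree(G_1,\Int\Ball^{n+1},0)$. Finally, $G_1=C_{w_y}$ in the notation of Lemma~\ref{lem:radial-cone-degree}, so the degree equals $\degree w_y=d$ by Lemma~\ref{prop:orbit-degree}.

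The main point to check carefully is the identification of the sphere degree of $w_y$ on $\Sph^n$ with the boundary-degree convention used in Lemma~\ref{lem:radial-cone-degree}. That lemma is stated for $h:\Sph^{n-1}\to\Sph^{n-1}$ in $\R^n$; here it is applied in dimension $n+1$, which is legitimate since $n+1\geq2$. Otherwise the only subtle points are that $0\notin G_t(\partial\Ball^{n+1})$ for all $t$, and that the degree is constant on the connected open set $\Int(rK)$. Both follow from $F_y(x)\neq0$ for $x\neq0$ and from the fact that $\Int(rK)$ misses $F_y(\partial\Ball^{n+1})=r\partial K$.
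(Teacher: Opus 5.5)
Your proof is correct, and it rests on the same idea as the paper's: a radial rescaling connects $F_y$ to the cone $C_{w_y}$, after which Lemma~\ref{lem:radial-cone-degree} and Lemma~\ref{prop:orbit-degree} finish the argument. The difference is in how that connection is made.

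The paper writes $F_y|_{\Ball^{n+1}}=\widehat\Psi_r\circ C_{w_y}$ exactly, where $\widehat\Psi_r(t\theta)=tr\rho_K(\theta)\theta$ is a homeomorphism. It shows $\widehat\Psi_r$ is orientation-preserving via the isotopy $\Psi_s$ to the identity. It then sends every $v\in\Int(rK)$ to $v_0=\widehat\Psi_r^{-1}(v)$, using invariance of degree under orientation-preserving homeomorphisms.

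You instead first reduce to $v=0$. You use local constancy of the degree on the convex set $\Int(rK)$, which misses $F_y(\Sph^n)\subset r\partial K$. You then deform $F_y$ to $C_{w_y}$ by $G_t$. In polar form, $G_t(sz)=s\lambda_t(w)w$ with $w=w_y(z)$, where $\lambda_t(w)=r\rho_K(w)/\bigl((1-t)+tr\rho_K(w)\bigr)$ runs from $r\rho_K(w)$ to $1$. So, up to how the radial factor is interpolated, $G_t$ is the paper's $\Psi_{1-t}\circ C_{w_y}$.

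The reduction to $v=0$ is genuinely needed in your route. For other $v\in\Int(rK)$ the moving boundary images $G_t(\Sph^n)$, which pass from $r\partial K$ to $\Sph^n$, may cross $v$. You handled this correctly.

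Your version uses only homotopy invariance and local constancy in $v$. It needs neither the composition rule nor a separate check that $\widehat\Psi_r$ preserves orientation. The paper's version gives an explicit factorization valid for all $v$ at once. You should simply delete the abandoned first display for $H_t$ (the one containing $\cdots$).
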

	
	\begin{proof}
		Fix $y\in E\setminus\{0\}$ and write $r=p_S(y)$. Define $\widehat\Psi_r(0)=0$ and
		$$
		\widehat\Psi_r(t\theta)=tr\rho_K(\theta)\theta,
		\quad\text{for }t>0,\ \theta\in\Sph^n.
		$$
		This is an orientation-preserving homeomorphism of $\R^{n+1}$
		carrying $(\Ball^{n+1},\Sph^n)$ onto $(rK,r\partial K)$. Indeed,
		since $r\rho_K(\theta)>0$, the maps
		$$
		\Psi_s(0)=0,
		\qquad
		\Psi_s(t\theta)
		=t\bigl((1-s)+sr\rho_K(\theta)\bigr)\theta,
		\quad 0\leq s\leq1,
		$$
		form an isotopy through homeomorphisms from
		$\operatorname{Id}_{\R^{n+1}}$ to $\widehat\Psi_r$.
		
		Let $C_{w_y}:\Ball^{n+1}\to\Ball^{n+1}$ be the ordinary cone on $w_y$, defined by $C_{w_y}(tz)=tw_y(z)$. For $z\in\Sph^n$, \eqref{eq:cone-image} gives
		$F_y(z)\in r\partial K$. This, together with that $w_y(z)=F_y(z)/|F_y(z)|$ and the definition of $\rho_K$, yields that
		$$
		F_y(z)
		=|F_y(z)|w_y(z)=r\rho_K(w_y(z))w_y(z)
		=\widehat\Psi_r(w_y(z)).
		$$
		By positive homogeneity, $F_y|_{\Ball^{n+1}}=\widehat\Psi_r\circ C_{w_y}$.
        
		Let $v\in\Int(rK)$ and set
		$v_0=\widehat\Psi_r^{-1}(v)\in\Int\Ball^{n+1}$. By invariance of Brouwer degree under the orientation-preserving homeomorphism $\widehat\Psi_r$ and Lemmas~\ref{lem:radial-cone-degree} and~\ref{prop:orbit-degree},
		$$
		\degree(F_y,\Int\Ball^{n+1},v)=\degree(C_{w_y},\Int\Ball^{n+1},v_0)=\degree w_y=d.
		$$
		If $v\notin rK$, then $v\notin F_y(\Ball^{n+1})$ by \eqref{eq:cone-image}, so the degree is zero.
	\end{proof}
	
	We next use Lemma~\ref{prop:cone-degree} and the signed degree formula to show that certain powers of $p_S$ are homogeneous polynomials.
	
	\begin{lemma}\label{prop:polynomial-powers}
		For each integer $k\geq0$, the function $y\mapsto p_S(y)^{n+1+2k}$ is a homogeneous polynomial on $E$ of degree $n+1+2k$.
	\end{lemma}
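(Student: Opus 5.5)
Apply Lemma~\ref{prop:signed-degree-formula} to $F_y$ on $\Omega=\Int\Ball^{n+1}$, with the test function $\varphi(v)=|v|^{2k}$ restricted to a large ball so that it is bounded. By Lemma~\ref{lem:cone-lipschitz}, $F_y$ is Lipschitz on $\R^{n+1}$, and $\vol_{n+1}(\Sph^n)=0$, so the lemma applies. Lemma~\ref{prop:cone-degree} shows that $\degree(F_y,\Int\Ball^{n+1},\cdot)$ equals $d$ on $\Int(p_S(y)K)$ and $0$ outside $p_S(y)K$. Since $\partial K$ is Lebesgue-null, the right-hand side is
$$
d\int_{p_S(y)K}|v|^{2k}\dd v=d\,p_S(y)^{n+1+2k}\int_K|v|^{2k}\dd v .
$$
Here $c_k=\int_K|v|^{2k}\dd v>0$, and we used the substitution $v=p_S(y)w$. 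This gives the identity
$$
p_S(y)^{n+1+2k}=\frac{1}{d\,c_k}\int_{\Int\Ball^{n+1}}|F_y(x)|^{2k}\det DF_y(x)\dd x,
$$
valid for $y\neq0$ and trivially for $y=0$.

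It remains to see that the right-hand side is a homogeneous polynomial in $y$ of degree $n+1+2k$. By Rademacher's theorem, $\cA$ is differentiable at almost every $x$, and there $DF_y(x)=D\cA(x)[\,\cdot\,]\,y$, which is linear in $y$. Indeed, $F_y(x)=\cA(x)y$ is linear in $y$ for each fixed $x$, and the differentiability set of $\cA$ does not depend on $y$. Hence $\det DF_y(x)$ is a homogeneous polynomial of degree $n+1$ in $y$, with coefficients that are polynomial expressions in the entries of $D\cA(x)$. Likewise, $|F_y(x)|^{2k}=|\cA(x)y|^{2k}$ is a homogeneous polynomial of degree $2k$ in $y$ with bounded coefficients. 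Expanding the integrand in monomials $y^\beta$ with $|\beta|=n+1+2k$ gives coefficients of the form $\int_{\Ball^{n+1}}a_\beta(x)\dd x$. Each $a_\beta$ is a product of entries of $\cA$ and of $D\cA$, so it is bounded and measurable because $\cA$ is Lipschitz, and each integral is finite. Integrating term by term yields a homogeneous polynomial of degree $n+1+2k$.

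The main point needing care is that the degree formula is applied for each $y$ separately, while polynomiality requires the integrand to be a polynomial in $y$ with $y$-independent coefficients almost everywhere. I would handle this by fixing once and for all the full-measure set on which $\cA$ is differentiable; this is where the chain rule $DF_y=(D\cA)\,y$ holds for every $y$ simultaneously. One also has to check that the signed degree formula with the truncated $\varphi$ reproduces exactly $|v|^{2k}$ on the relevant set. This holds because the degree vanishes outside the compact set $p_S(y)K$, so the truncation changes nothing. The remaining steps are routine.
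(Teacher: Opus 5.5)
Your proposal is correct and follows the paper's proof essentially step for step. The signed degree formula combined with Lemma~\ref{prop:cone-degree} yields $d\,p_S(y)^{n+1+2k}\int_K|u|^{2k}\dd u$, and fixing one $y$-independent Rademacher null set for $\cA$ makes the integrand a homogeneous polynomial in $y$ with integrable coefficients. The only differences are cosmetic: you truncate $\varphi$ to a large ball instead of to $p_S(y)K$, which is harmless on both sides since $F_y(\overline{\Omega})=p_S(y)K$ and the degree vanishes outside it, and you state explicitly the nullity of $\partial K$ that the paper uses tacitly.
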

	
	\begin{proof}
		We divide the proof into two steps. Fix an integer $k\geq0$.
		
		\textbf{Step 1.} Derive a scaling identity for $p_S(y)$ from the
	degree of $F_y$.
	
	Fix $y\in E\setminus\{0\}$, set $r=p_S(y)$, and let
	$\Omega=\Int\Ball^{n+1}$. By Lemma~\ref{lem:cone-lipschitz},
	$F_y$ is Lipschitz on $\R^{n+1}$. By \eqref{eq:cone-image},
	$F_y(\overline\Omega)=rK$ and
	$F_y(\partial\Omega)=r\partial K$. Since
	$\vol_{n+1}(\partial\Omega)=0$, we can apply
	Lemma~\ref{prop:signed-degree-formula} to $F_y$ on $\Omega$ with the
	bounded Borel function equal to $|v|^{2k}$ on $rK$ and zero
	outside, where $|v|^0=1$. Using Lemma~\ref{prop:signed-degree-formula}, Lemma~\ref{prop:cone-degree} and then
	making the change of variables $v=ru$, we obtain
	\begin{align}\label{eq:moment-degree-identity}
		\int_\Omega |F_y(x)|^{2k}\det DF_y(x)\dd x
		&=\int_{\R^{n+1}}|v|^{2k}
		\degree(F_y,\Omega,v)\dd v=d\int_{rK}|v|^{2k}\dd v\\
		&=dr^{n+1+2k}\int_K|u|^{2k}\dd u=dp_S(y)^{n+1+2k}\int_K|u|^{2k}\dd u.
	\notag
	\end{align}
	
	\textbf{Step 2.} Show that the left-hand side of \eqref{eq:moment-degree-identity} is a homogeneous polynomial in $y$.
	
	By Rademacher's theorem~\cite[Theorem~3.2]{EvansGariepy2015}, there exists a null set $Z\subset\Omega$ such that $\cA$ is differentiable at every $x\in\Omega\setminus Z$. This set is independent of $y$. From $F_y(x)=\cA(x)y$ we obtain
	$$
	DF_y(x)\xi=\bigl(D\cA(x)\xi\bigr)y,
	\qquad x\in\Omega\setminus Z,\quad \xi\in\R^{n+1}.
	$$
	Thus, for each $x\in\Omega\setminus Z$, the map
	$y\mapsto DF_y(x)$ is linear. Hence
	$y\mapsto\det DF_y(x)$ and
	$y\mapsto|F_y(x)|^{2k}
	=\langle\cA(x)y,\cA(x)y\rangle^k$
	are homogeneous polynomials of degrees $n+1$ and $2k$, respectively.
	Their product is therefore a homogeneous polynomial of degree
	$n+1+2k$.
	
	Since $\cA$ is bounded on $\Ball^{n+1}$ and $D\cA$ is essentially bounded on $\Omega$, the coefficients of this polynomial are integrable over $\Omega$. Hence
	$$
	P_k(y):=\int_\Omega |F_y(x)|^{2k}\det DF_y(x)\dd x
	$$
	is a homogeneous polynomial of degree $n+1+2k$. By \eqref{eq:moment-degree-identity}, for $y\neq0$,
	$$
	P_k(y)=c_kp_S(y)^{n+1+2k},
	\qquad
	c_k:=d\int_K|u|^{2k}\dd u>0.
	$$
	Therefore $p_S^{n+1+2k}=P_k/c_k$ on $E\setminus\{0\}$. Both sides
	vanish at the origin, so the identity holds on all of $E$. This proves the lemma.
	\end{proof}
	
	To deduce from the cases $k=0$ and $k=1$ of Lemma~\ref{prop:polynomial-powers} that $p_S^2$ is a quadratic form, we use the following elementary consequence of unique factorization.
	
	\begin{lemma}\label{lem:consecutive-powers}
    Let $a\geq1$ be an integer, and let $P,Q$ be nonzero elements of
    a unique factorization domain such that $Q^a=P^{a+1}$. Then
    $P$ divides $Q$.
\end{lemma}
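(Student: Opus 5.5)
The plan is to compare prime factorizations, using multiplicities, since a unique factorization domain is exactly where this is available. Because $Q^a=P^{a+1}$ and both sides are nonzero, neither $P$ nor $Q$ is zero. If $P$ is a unit, then $P$ divides $Q$ trivially. Otherwise, write $P=u\prod_i \pi_i^{e_i}$ and $Q=u'\prod_i \pi_i^{f_i}$ with pairwise nonassociate primes $\pi_i$, units $u,u'$, and exponents $e_i,f_i\geq0$; we may use a common finite list of primes by allowing zero exponents. For each prime $\pi_i$, let $v_i$ denote its valuation, that is, the exponent of $\pi_i$ in the factorization. Then $v_i(xy)=v_i(x)+v_i(y)$ for nonzero $x,y$.

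Applying $v_i$ to $Q^a=P^{a+1}$ gives $af_i=(a+1)e_i$ for every $i$. Since $\gcd(a,a+1)=1$, the integer $a$ divides $e_i$. Write $e_i=am_i$; then $f_i=(a+1)m_i\geq m_i\cdot 1$. What we actually need is $f_i\geq e_i$. This holds because $f_i=\frac{a+1}{a}e_i\geq e_i$, as $e_i\geq0$ and $(a+1)/a>1$. So coprimality is not even needed; only nonnegativity of the exponents is used.

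Hence every prime occurs in $Q$ with multiplicity at least its multiplicity in $P$. Therefore $Q=P\cdot\bigl(u'u^{-1}\prod_i\pi_i^{f_i-e_i}\bigr)$, and the factor in parentheses is a ring element. This shows that $P$ divides $Q$. The only point needing care is the bookkeeping of units and associates in the factorizations, which is routine. I do not expect a genuine obstacle here; the lemma is a direct valuation comparison.
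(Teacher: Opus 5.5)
Your proof is correct and is essentially the paper's argument: applying the $\pi$-adic valuation to $Q^a=P^{a+1}$ gives $a\,\nu_\pi(Q)=(a+1)\nu_\pi(P)$, hence $\nu_\pi(Q)\geq\nu_\pi(P)$ for every irreducible $\pi$, so $P\mid Q$. As you note yourself, the detour through $\gcd(a,a+1)=1$ is not needed.
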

	
	\begin{proof}
		For each irreducible element $\pi$, let $\nu_\pi(P)$ and $\nu_\pi(Q)$ denote its multiplicities in $P$ and $Q$. Comparing multiplicities in $Q^a=P^{a+1}$ gives $a\nu_\pi(Q)=(a+1)\nu_\pi(P)$, and hence
		$\nu_\pi(Q)\geq\nu_\pi(P)$. Thus every irreducible factor of $P$ occurs in $Q$ with at least the same multiplicity, so $P\mid Q$.
	\end{proof}
	
	\begin{lemma}\label{prop:model-euclidean}
		The function $y\mapsto p_S(y)^2$ is a positive-definite quadratic form on $E$. Consequently, $S$ is an ellipsoid.
	\end{lemma}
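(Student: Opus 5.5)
Set $m=n+1$, which is even since $n$ is odd. By Lemma~\ref{prop:polynomial-powers} with $k=0$ and $k=1$, the functions $P:=p_S^{m}$ and $R:=p_S^{m+2}$ are homogeneous polynomials on $E$, of degrees $m$ and $m+2$. Neither is the zero polynomial, because $p_S>0$ on $E\setminus\{0\}$. They satisfy $P^{m+2}=R^{m}$ as functions, and hence as elements of the polynomial ring $\R[y_1,\ldots,y_n]$, which is a unique factorization domain.

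Since $m$ is even, I would simplify first. Put $a=m/2$, and set $P'=p_S^{2a}$ and $R'=p_S^{2a+2}$, so that $P'=P$ and $R'=R$. The identity becomes $(P^{a+1})^2=(R^a)^2$. Rather than take square roots, I would apply the multiplicity comparison from Lemma~\ref{lem:consecutive-powers} directly to $R^{m}=P^{m+2}$. For each irreducible $\pi$ this gives $m\,\nu_\pi(R)=(m+2)\nu_\pi(P)$, hence $\nu_\pi(R)\ge\nu_\pi(P)$, so $P\mid R$ in $\R[y]$. Therefore $q:=R/P$ is a polynomial. On $E\setminus\{0\}$ we have $q=p_S^{m+2}/p_S^{m}=p_S^2$, and $q$ is homogeneous of degree $2$. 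Indeed, a quotient of homogeneous polynomials that is itself a polynomial is homogeneous, of degree $(m+2)-m=2$. Both sides vanish at $0$, so $p_S^2=q$ on all of $E$. This step is exactly Lemma~\ref{lem:consecutive-powers}, and I can cite it directly with exponent $a=m$ written as $Q^a=P^{a+1}$, adjusted as follows. With $Q=R^{\cdot}$ the exponents do not literally match the form $Q^a=P^{a+1}$. So I would either restate the multiplicity argument in the general form $Q^{b}=P^{c}$ with $c\ge b$, or reduce to Lemma~\ref{lem:consecutive-powers} by noting that $m$ even gives $(R)^{a}=\pm(P)^{a+1}$ with $a=m/2$. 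The sign is $+$ because both sides are nonnegative functions. The reduction takes the square root in the UFD: from $X^2=Y^2$ we get $X=\pm Y$, since $(X-Y)(X+Y)=0$ in an integral domain. Then Lemma~\ref{lem:consecutive-powers} with $Q=R$ gives $P\mid R$. This square-root step is where the parity of $n$ is used, and it is the only point needing care.

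Finally, $q=p_S^2$ is a quadratic form, and it is positive definite because $p_S$ is a norm, so $q(y)>0$ for $y\ne0$. Writing $q(y)=\langle By,y\rangle$ with $B$ self-adjoint and positive definite, we get
$$S=\{y:p_S(y)\le1\}=\{y:\langle By,y\rangle\le1\}=B^{-1/2}\Ball(E),$$
which is an ellipsoid.
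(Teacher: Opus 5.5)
Your proposal is correct and is essentially the paper's proof: Lemma~\ref{prop:polynomial-powers} with $k=0,1$, the multiplicity comparison of Lemma~\ref{lem:consecutive-powers} giving $P\mid R$, homogeneity of the degree-two quotient, and positivity of $p_S$ to conclude that $S$ is an ellipsoid. The square-root detour is unnecessary, because with $a=(n+1)/2$ the identity $R^{a}=P^{a+1}$ already holds pointwise (both sides equal $p_S^{(n+1)(n+3)/2}$), and this is exactly how the paper applies Lemma~\ref{lem:consecutive-powers}.
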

	
	\begin{proof}
		Let $a=(n+1)/2$. Since $n\geq3$ is odd, $a$ is a positive integer. Lemma~\ref{prop:polynomial-powers}, applied with $k=0$ and $k=1$, shows that
		$$
		P(y):=p_S(y)^{n+1}=p_S(y)^{2a},
		\qquad
		Q(y):=p_S(y)^{n+3}=p_S(y)^{2(a+1)}
		$$
		are nonzero homogeneous polynomials. Pointwise on $E$, we have $Q^a=P^{a+1}$.
		Since $\R$ is infinite, this pointwise equality is an identity in the ring $\R[E]$ of polynomial functions $E\to\R$. After choosing linear coordinates $y_1,\ldots,y_n$ on $E$, this ring is isomorphic to $\R[y_1,\ldots,y_n]$ and hence is a unique factorization domain~\cite[Section~9.3, Theorem~7]{DummitFoote2004}. Lemma~\ref{lem:consecutive-powers} therefore gives $Q=PR$ for some polynomial $R$.
		
		Since $P$ and $Q$ are homogeneous of degrees $n+1$ and $n+3$, respectively, comparison of homogeneous components in $Q=PR$ shows that $R$ is homogeneous of degree two. For $y\neq0$, we have $P(y)>0$ and hence $$R(y)=\frac{Q(y)}{P(y)}=p_S(y)^2.$$ Both sides vanish at the origin, so this identity holds on all of $E$. Thus $p_S^2=R$ is a quadratic form. Since $p_S(y)>0$ for $y\neq0$, this quadratic form is positive definite. Consequently, $$S=\{y\in E:R(y)\leq1\}$$ is an ellipsoid.
	\end{proof}

	\begin{proof}[Proof of Theorem~\ref{thm:odd-hyperplane}]
    By Lemma~\ref{prop:model-euclidean}, $S$ is an ellipsoid. Since every central hyperplane section of $K$ is linearly equivalent to
    $S$, every such section is an ellipsoid.

    Let $x,y\in\R^{n+1}$. Since $n\geq3$, there is a hyperplane $H\subset\R^{n+1}$ containing $\Span\{x,y\}$. The unit ball of the restricted norm $p_K|_H$ is $K\cap H$, which is an ellipsoid. Hence $p_K|_H$ is induced by an inner product and
    $$
        p_K(x+y)^2+p_K(x-y)^2
        =2p_K(x)^2+2p_K(y)^2.
    $$
    Since $x,y$ are arbitrary, $p_K$ satisfies the parallelogram identity. The Jordan--von Neumann
    theorem~\cite{JordanVonNeumann1935} therefore implies that $p_K$ is induced by an inner product. Thus its unit ball $K$ is an ellipsoid.
    \end{proof}
	
	\begin{proof}[Proof of Theorem~\ref{thm:main}]
		If $n$ is even, the conclusion follows from Gromov's result \cite[Theorem~1]{Gromov1967}.
		
		It remains to consider the case when $n$ is odd. By the codimension-one reduction explained in the introduction, it suffices to assume that $\dim X=n+1$. Let $B_X$ be the unit ball of $X$. Its central
		hyperplane sections are the unit balls of the $n$-dimensional
		subspaces of $X$ and hence are linearly equivalent. Identifying
		$X$ linearly with $\R^{n+1}$,
		Theorem~\ref{thm:odd-hyperplane} shows that $B_X$ is an ellipsoid.
		Thus $X$ is a Hilbert space.
	\end{proof}
	
	\begin{proof}[Proof of Theorem~\ref{cor:convex-main}]
    Equip $\R^N$ with the norm $p_K$. The unit ball of the restricted norm on any $n$-dimensional subspace of $\R^N$ is its intersection with $K$. Hence the hypothesis says precisely that all $n$-dimensional subspaces of $(\R^N,p_K)$ are linearly isometric. Theorem~\ref{thm:main} therefore implies that $p_K$ is induced by an inner product, so its unit ball $K$ is an ellipsoid.
    \end{proof}

	\begin{remark}
		The oddness of $n$ is used at two distinct points.
		
		First, $n-1$ is positive and even, so
		$\pi_{n-1}(G^\circ)$ is finite and the bundle class can be killed
		by pullback along a suitable positive-degree self-map of
		$\Sph^n$.
		
		Second, the cases $k=0$ and $k=1$ of
		Lemma~\ref{prop:polynomial-powers} show that $p_S^{n+1}$ and
		$p_S^{n+3}$ are polynomials. Since $n$ is odd, these are two
		consecutive powers of $p_S^2$, allowing the unique-factorization
		argument to show that $p_S^2$ is a polynomial.
	\end{remark}
	
	\vskip 20pt
	\vskip3pt \noindent{\bf Declaration on the use of AI}: The authors had reduced the main problem to proving Theorem~\ref{cor:global-exact-family} before using generative AI tools. An approach to that theorem subsequently emerged through extensive interactions with ChatGPT 5.5 Pro and ChatGPT 5.6 Pro. 
	The initial draft of Section 3 and the corresponding parts in Section 2 were generated by GPT 5.6 Sol following this approach, and subsequently checked and rewritten by the authors. 
	GPT-5.6 Sol was also used to improve the exposition. The authors take full responsibility for the mathematical content and the final text.
	
	\vskip3pt \noindent{\bf Conflict of Interest.} The authors declare no conflict of interest.
	\vskip3pt \noindent{\bf Data Availability.} Not applicable.

\end{document}